\theoremstyle{definition}
\newtheorem{theorem}{Theorem}[section]
\newtheorem{definition}[theorem]{Definition}
\newtheorem{conjecture}[theorem]{Conjecture}
\newtheorem{lemma}[theorem]{Lemma}
\newtheorem{remark}[theorem]{Remark}
\newtheorem{example}[theorem]{Example}
\newtheorem*{remark*}{Remark}
\numberwithin{equation}{section}
\newcommand{\dv}{\text{div}}
\newcommand{\mc}{\mathcal}
\newcommand{\mb}{\mathbb}
\newcommand{\eps}{\varepsilon}
\newcommand{\bn}{\mathbf n}
\DeclareMathOperator{\Ric}{Ric}
\DeclareMathOperator{\Hess}{Hess}
\DeclareMathOperator{\dist}{dist}
\title[Compactness of Constant Mean Curvature Surfaces]{Compactness of Constant Mean Curvature Surfaces in Three Manifold with Positive Ricci Curvature}
\date{\today}
\author{Ao Sun}
\address{Department of Mathematics, Massachusetts Institute of Technology, Cambridge, MA 02139, USA}
\email{aosun@mit.edu}
\begin{document}
\maketitle
\begin{abstract}
In this paper we prove a compactness theorem for constant mean curvature surfaces with area and genus bound in three manifold with positive Ricci curvature. As an application, we give a lower bound of the first eigenvalue of constant mean curvature surfaces in three manifold with positive Ricci curvature.
\end{abstract}

\section{Introduction}
Let $M$ be a $3$ dimensional manifold and $\Sigma\subset M$ be a surface. Let $H$ be the mean curvature of $\Sigma$. We say $\Sigma$ is a constant mean curvature (CMC) surface if $H$ is a constant. In particular, if $H$ is constant $0$, $\Sigma$ is a minimal surface. There are many examples of CMC surfaces in $\mb R^3$, see \cite{meeks2016constant}. Recently in \cite{zhou2017min} Zhou-Zhu proved the existence of embedded CMC hypersurfaces in closed $n$ dimensional manifold with $2\leq n\leq 6$.

In this paper we prove the following compactness theorem for embedded CMC surfaces in three manifold with positive Ricci curvature:

\begin{theorem}\label{T:Main theorem}
Let $M$ be a $3$ dimensional compact manifold with positive Ricci curvature and no boundary. Suppose $\Sigma_i\subset M$ is a sequence of closed embedded CMC surfaces with constant mean curvature $H_i$, satisfies the following conditions:
\begin{enumerate}
\item $\vert H_i\vert\leq H_0$ for some constant $H_0$,
\item The genus of $\Sigma_i$ are uniformly bounded,
\item The area of $\Sigma_i$ are uniformly bounded.
\end{enumerate}
Then 

\begin{enumerate}
\item either: there is an self-touching smoothly immersed CMC surface $\Sigma$ with finitely many neck pinching points, such that a subsequence of $\Sigma_i$ converges to $\Sigma$ in $C^k$ topology for any $k\geq2$ besides those neck pinching points.
\item or: there is a embedded minimal surface $\Sigma$ such that $\Sigma_i$ converges to $\Sigma$ with multiplicity $2$.
\end{enumerate}
\end{theorem}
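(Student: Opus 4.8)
The plan is to run the Choi--Schoen/Colding--Minicozzi/White compactness machinery for embedded minimal surfaces, treating the mean curvature as a lower-order term, and to bring in the $\Ric>0$ hypothesis only at the end to identify the multiplicity of the limit. First, after passing to a subsequence, I would assume $H_i\to H_\infty$ with $|H_\infty|\le H_0$, and extract a subsequential integral-varifold limit from the area bound (nonzero by monotonicity, or by Frankel's property for $M$). The first real input is the a priori curvature bound $\int_{\Sigma_i}|A_{\Sigma_i}|^2\le C$: the Gauss equation gives $|A_{\Sigma_i}|^2=H_i^2+2\bar K(T\Sigma_i)-2K_{\Sigma_i}$, where $\bar K(T\Sigma_i)$ is the sectional curvature of $M$ along $\Sigma_i$, so integrating and applying Gauss--Bonnet yields $\int_{\Sigma_i}|A_{\Sigma_i}|^2=H_i^2\area(\Sigma_i)+2\int_{\Sigma_i}\bar K(T\Sigma_i)-4\pi\chi(\Sigma_i)$, which is controlled by the bounds on $|H_i|$, area, and genus together with $\|\bar K\|_{L^\infty(M)}$.

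Next I would establish an $\eps$-regularity lemma of Choi--Schoen type for CMC surfaces: there are $\eps_0,r_0>0$ such that $\int_{\Sigma_i\cap B_r(p)}|A_{\Sigma_i}|^2<\eps_0$ with $r<r_0$ forces $\sup_{B_{r/2}(p)\cap\Sigma_i}|A_{\Sigma_i}|^2\le Cr^{-2}$ (the $H_i$-term is subcritical, so the minimal-surface proof adapts via a local graph reduction and Schauder estimates). Together with $\int_{\Sigma_i}|A_{\Sigma_i}|^2\le C$, a point-picking/covering argument then produces a finite set $\mathcal Y\subset M$, $\#\mathcal Y\le C/\eps_0$, such that the curvature of $\Sigma_i$ is uniformly bounded on compact subsets of $M\setminus\mathcal Y$. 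On $M\setminus\mathcal Y$, uniform curvature bounds, embeddedness, and the area bound (which caps the number of sheets) give, on a further subsequence, $C^\infty_{\mathrm{loc}}$ convergence of $\Sigma_i$ to a smooth CMC surface $\Sigma$ with mean curvature $H_\infty$, locally an $m$-sheeted graph with $m$ locally constant; the CMC equation passes to the limit. A removable-singularity argument (finite density and area near the isolated points of $\mathcal Y$) extends $\Sigma$ smoothly and immersedly across $\mathcal Y$. At each $y\in\mathcal Y$ a blow-up --- the dilation factors go to infinity, so the rescaled mean curvatures vanish and the limit is a nonflat complete embedded minimal surface in $\R^3$ of finite total curvature (by the $\int|A|^2$ bound) --- shows $y$ is a catenoidal neck pinch: near $y$, $\Sigma_i$ is two sheets joined by a degenerating neck, which, since $\Sigma_i$ is embedded, touch $\Sigma$ tangentially in the limit. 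Thus $\Sigma$ is a smoothly immersed, possibly self-touching CMC surface with finitely many neck-pinching points, with smooth convergence away from them.

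It remains to determine $m$, and this is where $\Ric>0$ enters. If $m\ge2$ over some component, then on a smooth-convergence piece of $\Sigma$ the $\Sigma_i$ appear as disjoint graphs $u_i^1<\dots<u_i^m$ over $\Sigma$, all solving $\mathcal H(u)=H_i$; hence $w_i:=u_i^2-u_i^1>0$ satisfies $\mathcal L_i w_i=0$, where $\mathcal L_i=\int_0^1 D\mathcal H\big(tu_i^2+(1-t)u_i^1\big)\,dt$ and $D\mathcal H$ is the Jacobi operator at the given graph. Normalizing $w_i/\sup w_i$, using interior estimates and the Harnack inequality, and removing the finite singular set by the usual bounded-Jacobi-field argument, I would get a positive solution $\varphi$ of $L_\Sigma\varphi=0$ on $\Sigma$, with $L_\Sigma=\Delta_\Sigma+|A_\Sigma|^2+\Ric(\nu,\nu)$. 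A positive solution forces the lowest eigenvalue of $-L_\Sigma$ to be $0$; but testing the Rayleigh quotient with the constant function $1$ bounds it above by $-\area(\Sigma)^{-1}\int_\Sigma\big(|A_\Sigma|^2+\Ric(\nu,\nu)\big)<0$ since $\Ric>0$ --- a contradiction whenever $\Sigma$ is two-sided. So $m\ge2$ can occur only when $\Sigma$ is one-sided; then $\Sigma_i$ (connected, two-sided) is a graph over the orientable double cover $\widetilde\Sigma$, and the same argument on $\widetilde\Sigma$ forces $m=2$. Finally, $|\vec H_{\Sigma_i}|=|H_i|\to|H_\infty|$ on each of the two sheets over a point of $\Sigma$, so the limiting mean curvature vector has constant length $|H_\infty|$; but a one-sided $\Sigma$ has nontrivial normal line bundle, so the section $\vec H_\Sigma$ vanishes somewhere, forcing $H_\infty=0$. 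Hence in the multiplicity-two case $\Sigma$ is an embedded minimal surface approached with multiplicity $2$ --- case (2); otherwise $m=1$ and we are in case (1).

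The hard part will be the blow-up analysis near $\mathcal Y$: proving the CMC $\eps$-regularity and scale-invariant curvature estimates, showing the blow-up limits are nonflat complete minimal surfaces of finite total curvature so that $\mathcal Y$ is indeed finite, and --- most delicately --- upgrading the blow-up picture to the honest geometric statement that near each bad point $\Sigma_i$ really is two sheets joined by a shrinking neck, so that the limit is a genuinely immersed self-touching CMC surface with true neck pinches rather than something more degenerate. A secondary nuisance is reconciling cases (1) and (2) when a single connected $\Sigma_i$ could a priori exhibit both neck-pinch regions and multiplicity-two regions; I would handle this component by component of $\Sigma$, observing that a curvature-concentration point forces a nonflat (hence multiplicity-one, neck-type) blow-up, so a genuine multiplicity-two limit has empty singular set.
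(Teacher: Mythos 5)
Your overall strategy is the same as the paper's: a Choi--Schoen type $\eps$-regularity for CMC surfaces plus the Gauss--Bonnet total curvature bound gives a finite concentration set, removable singularities give a smooth self-touching immersed limit, and a positive Jacobi field built from the difference of two sheets, combined with $\Ric>0$ (test the stability inequality with $u\equiv 1$), controls the multiplicity. The genuine gap is in the multiplicity step. You assert that when $m\ge 2$ the sheets $u_i^1<\dots<u_i^m$ ``all solve $\mathcal H(u)=H_i$,'' so that $w_i=u_i^2-u_i^1$ solves the homogeneous linearized equation. But the scalar mean curvature of a graph is computed with respect to a chosen co-orientation, and two sheets of a connected embedded $\Sigma_i$ that sandwich $\Sigma$ carry opposite co-orientations: with respect to a fixed upward normal of $\Sigma$, one solves $\mathcal H(u)=H_i$ and the other $\mathcal H(u)=-H_i$. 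Then $w_i$ satisfies $\mathcal L_i w_i=2H_i$, and after dividing by $w_i(p)\to 0$ the inhomogeneous term is uncontrolled, so no positive Jacobi field is produced. As written, your argument would wrongly rule out a two-sided minimal limit attained with multiplicity two --- which is precisely conclusion (2) of the theorem --- and you recover case (2) only through the one-sided scenario, which is not the only way multiplicity two occurs. The correct version (and the paper's) is: the difference of two sheets is a Jacobi field only when they have the same co-orientation; if $m\ge 3$, the pigeonhole principle among the two signs yields two such sheets and hence the contradiction, so $m\le 2$; and if $m=2$ the co-orientations must differ, whence the limiting mean curvature vector equals both $H_\infty\mathbf{n}$ and $-H_\infty\mathbf{n}$, forcing $H_\infty=0$ and landing in the minimal multiplicity-two case.

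A secondary difference of route: the paper does not classify blow-ups at the concentration points. It defines a neck-pinching point combinatorially (whether the two local graphs of $\Sigma_i$ over the two touching disks of $\Sigma$ lie in the same connected component of $\Sigma_i\cap B_r$) and obtains smooth convergence away from such points by running the multiplicity-one/Allard argument on each local component separately. Your proposal instead wants every concentration point to produce a nonflat complete embedded minimal surface of finite total curvature in $\R^3$ and to upgrade this to an honest catenoidal neck joining two sheets; this is stronger than what the theorem requires, and it is exactly the step you flag as unproved. Either carry out that blow-up classification or replace it with the combinatorial definition; as it stands, the claim that each bad point ``is a catenoidal neck pinch'' is not established.
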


Here we say $\Sigma$ is {\bf self-touching} if at any non-embedded point $p\in\Sigma$, there is a small $r$ such that $B_r(p)\cap\Sigma$ is a union of two disks $D_1,D_2$, and $D_1$ can be written as a graph of function $\phi$ over $D_2$ where $\phi\geq0$ on $D_2$. Intuitively this means that $\Sigma$ is immersed but can not across itself. {\bf Neck pinching} points are special touching points. We will give precise definition in section \ref{section:Compactness Theorem}. Intuitively one can image we pinching a piece of plasticine into two pieces, and at the moment they are just detached, the point connects them is a neck pinching point.

\subsection{Compactness Theorem}

The compactness theorem for minimal surfaces was first developed by Choi-Schoen in \cite{choi1985space}. They proved the compactness theorem of embedded minimal surfaces in $3$ dimensional manifold with positive Ricci curvature. Later their result was generalized into many other situations. For example, in \cite{white1987curvature} White generalized the compactness theorem for surfaces which are stationary for parametric elliptic functionals, in \cite{colding2012smooth} Colding-Minicozzi generalized the compactness theorem to self-shrinkers in $\mb R^3$. We will follow the key ideas of these papers.

There are two main ingredients in Choi-Schoen's proof. The first ingredient is curvature estimate for minimal surface. Then we can get some uniform curvature bound besides finitely many points, so we can find a subsequence of $\Sigma_i$ converges smoothly to a limit surface $\Sigma$ besides finitely many points. Here we need to generalize the curvature estimate to CMC surfaces, and get an uniform curvature estimate only depends on the uniform mean curvature bound $H_0$.

The second ingredient is showing the multiplicity of the convergence is one. Then by a result of Allard in \cite{allard1972first} the convergence is smooth. There are two methods to show the multiplicity is one. Choi-Schoen argued that if the convergence has multiplicity more than one, then they can construct a family of functions which contradict the eigenvalue estimate in Choi-Wang; another method by White and Colding-Minicozzi argued that if the mulipicity is more than one, then the linearized operator may have positive Jacobi field, which is impossible if $M$ has positive Ricci curvature. We will follow the second method, because now we do not have eigenvalue estimate for CMC surfaces. In our case, a key observation is that although CMC surfaces and minimal surfaces satisfy different equations, their linearized operators are the same. Hence we may conduct the same argument as the minimal surfaces case.

If the limit surface is minimal, then the CMC surfaces may approach it on both sides with different orientation, and the differential operator is not the same as the differential operator of minimal surfaces. Thus, the convergence may not be multiplicity $1$. However, if the multiplicity of the convergence is more than $2$, then we can find two sheets with the same orientation, then the differential operator is again the same as the differential operator of minimal surfaces. Then we can get the a positive Jacobi field again to argue for contradiction.

As an application of the compactness Theorem \ref{T:Main theorem}, we can actually get a lower bound for the first eigenvalue of CMC surfaces. 

\begin{theorem}\label{T:Application eigenvalue}
Let $M$ be a three dimensional manifold with positive Ricci curvature. Suppose there is no embedded minimal surface in $M$ which is the multiplicity $2$ limit of a sequence of CMC surfaces. Then for any embedded CMC surface with area bound $V$, genus bound $G$ and mean curvature bound $\vert H\vert\leq H_0$, we have the first eigenvalue lower bound:

\begin{equation}
\lambda\geq \frac{\min\Ric -HC}{2},
\end{equation}
where $C$ is a constant depending on $M,V,G,H_0$.
\end{theorem}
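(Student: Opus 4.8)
The plan is to adapt Choi-Wang's eigenvalue estimate for embedded minimal surfaces to the CMC setting, where it will produce an error term proportional to $H$, and then to control that error term uniformly over the admissible class by invoking the compactness Theorem \ref{T:Main theorem}.

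First the local identity. Since $\Sigma$ is embedded and (for $H\ne 0$) two-sided, it separates $M$ into two domains $M_1,M_2$. Let $f$ be a first eigenfunction of $\Sigma$, so $\Delta_\Sigma f=-\lambda f$ and $\int_\Sigma f=0$, and let $u_i$ be the harmonic extension of $f$ into $M_i$. Applying Reilly's formula on each $M_i$, using $\Delta u_i=0$, and adding, the boundary terms involving the second fundamental form of $\Sigma$ cancel (they carry opposite signs on the two sides), while the mean-curvature boundary terms — which cancel as well when $H=0$ — leave a remainder $H\big(\int_\Sigma(\partial_\nu u_1)^2-\int_\Sigma(\partial_\nu u_2)^2\big)$. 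Discarding $\int_{M_i}|\Hess u_i|^2\ge 0$, using $\Ric_M\ge\min\Ric$, and noting $\int_{M_i}|\nabla u_i|^2=\int_\Sigma f\,\partial_\nu u_i>0$, this produces
\begin{equation}
\lambda\ \ge\ \frac{\min\Ric}{2}\ -\ \frac{|H|}{2}\,R(\Sigma),\qquad
R(\Sigma):=\frac{\int_\Sigma(\partial_\nu u_1)^2+\int_\Sigma(\partial_\nu u_2)^2}{\int_{M_1}|\nabla u_1|^2+\int_{M_2}|\nabla u_2|^2}.
\end{equation}
Thus it suffices to bound $R(\Sigma)$ by a constant $C=C(M,V,G,H_0)$ on the admissible class.

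For a fixed surface this is elementary elliptic theory: since $\int_\Sigma f=0$, the denominator is bounded below by $\sigma_1(M_i)\int_\Sigma f^2$ with $\sigma_1(M_i)$ the first Steklov eigenvalue of $M_i$, while $\int_\Sigma(\partial_\nu u_i)^2$ is controlled in terms of $\int_\Sigma f^2$ and $\lambda$ by boundary elliptic estimates; moreover $\lambda$ itself is bounded above by some $\Lambda(M,G,H_0)$, since the Gauss equation and Gauss-Bonnet bound $\int_\Sigma|A|^2$ in terms of $M,V,G,H_0$, and a monotonicity-type argument gives a positive lower bound on $\area(\Sigma)$ for embedded CMC surfaces with $|H|\le H_0$, so the Yang-Yau bound applies. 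The problem is that the Steklov lower bound and the boundary-regularity constant both deteriorate exactly when $\Sigma$ develops a thin neck — the degeneration permitted by Theorem \ref{T:Main theorem} once its case (2) is excluded.

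So I would conclude by contradiction: if $R(\Sigma_i)\to\infty$ along a sequence in the admissible class, then Theorem \ref{T:Main theorem} (case (2) ruled out by hypothesis) yields a subsequence converging to a self-touching immersed CMC surface $\Sigma_\infty$ with finitely many neck-pinching points, smoothly away from them; after also passing to a limit of the (suitably normalized) first eigenfunctions — their eigenvalues being bounded — one shows that the domains $M_i$ and the extensions $u_i$ converge away from the necks, and that near each neck-pinching point the eigenfunction mass, the normal derivatives, and the Dirichlet energy contributed by the neck are all negligible, using that the neck has small diameter together with $\int_\Sigma f=0$ via a Cheeger/capacity estimate. This forces $R(\Sigma_i)$ to converge to the finite value of $R$ on $\Sigma_\infty$, the desired contradiction. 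I expect this last step — analysing the first eigenfunction and the harmonic extensions near the neck-pinching points and showing the neck's contribution to $R$ vanishes in the limit — to be the main obstacle; a secondary technical nuisance is that $\Sigma_\infty$ is only immersed and possibly non-smooth, which I would sidestep by carrying out the Reilly computation only on the smooth surfaces $\Sigma_i$ and passing to the limit solely at the level of the scalar $R$.
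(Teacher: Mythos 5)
Your opening is the same as the paper's: Reilly's formula applied to the harmonic extension of a first eigenfunction, with the second-fundamental-form term disposed of by a sign/orientation argument and a leftover error term proportional to $H$ times a boundary integral of $u_n^2$, to be controlled uniformly via the compactness theorem. But you then make a move the paper deliberately avoids: you discard $\int_\Omega\vert\nabla^2u\vert^2\ge 0$ and try to bound $\int_\Sigma u_n^2$ by the Dirichlet energy $\int_\Omega\vert\nabla u\vert^2$ alone. That ratio $R(\Sigma)$ is not controlled by any trace-type inequality; you are forced into Dirichlet-to-Neumann operator norms, a uniform lower bound on the first Steklov eigenvalue of the domains $M_i$, and a Yang--Yau upper bound on $\lambda$ -- and the uniformity of all of these over the class is exactly the step you admit you cannot complete (the neck/capacity analysis). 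The paper instead keeps the Hessian term and proves a trace inequality of the form $\int_{\partial\Omega}u_n^2\le C\int_\Omega(\vert\nabla u\vert^2+\vert\nabla^2u\vert^2)$, where $C$ depends only on a pointwise bound $\vert A\vert\le A_0$ and on the width $\delta$ of a tubular neighbourhood of $\Sigma$; for $\vert H\vert C\le 1$ the Hessian part of the error is absorbed by the retained $\int\vert\nabla^2u\vert^2$, and the estimate $2\lambda\ge\min\Ric-\vert H\vert C$ drops out with no Steklov or eigenvalue-upper-bound input at all.

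This reduction also dissolves the hard part of your closing compactness argument. The only uniform facts needed are (a) $\vert A\vert\le C(H_0,V,G)$ and (b) a uniform $\delta_0$ for which $\exp_x(t\mathbf n)$ is a diffeomorphism on $\Sigma\times[-\delta_0,\delta_0]$. Both are stated as ``there exists $H_0>0$ such that\dots'', so a contradiction sequence automatically has $H_i\to 0$; by Theorem \ref{T:Main theorem} and the hypothesis excluding multiplicity-$2$ minimal limits, the limit is then an embedded \emph{minimal} surface, which by the maximum principle has no touching points, so the convergence is smooth everywhere and both (a) and (b) follow immediately. In your formulation, with $H_0$ fixed and the contradiction sequence possibly converging to a genuinely non-minimal self-touching limit with neck-pinching points, you really would have to do the eigenfunction/harmonic-extension/neck analysis you flag as the main obstacle; note also that this analysis would additionally have to rule out $\sigma_1(M_i)\to 0$ caused by the ambient domains (not just the surfaces) degenerating, which your sketch does not address. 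The fix is to restrict to small $H_0$ from the start (the estimate is vacuous otherwise), keep the Hessian term, and route the error through the trace inequality.
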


\subsection{Touching Phenomenon}

Touching phenomenon would not appear in minimal surfaces due to maximum principle, but it may appear in CMC surfaces, especially in a convergence process. The appearance of touching points makes the convergence of CMC surfaces much more complicated. 

Touching phenomenon is natural in our physical world. For example, when taking a shower one can observe many soap bubbles touching each other. More complicated examples appear in general three manifold rather than $\mb R^3$, and we give some examples in section \ref{section:Touching}.

In general touching points in the limit would not influence the smooth convergence in our main Theorem \ref{T:Main theorem} if they are generated when two part of the surface kissing each other. One can image the convergence is smooth on each pieces. See the first picture:
\begin{figure}\label{figure1}
\centering
\scalebox{1.25}{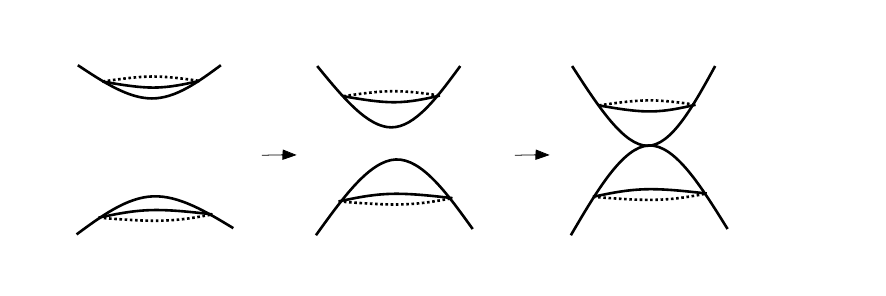}
\end{figure}

However neck pinching points are generated with some topological changes, so smooth convergence can not across these points. See the second picture.  
\begin{figure}\label{figure2}
\centering
\scalebox{1.25}{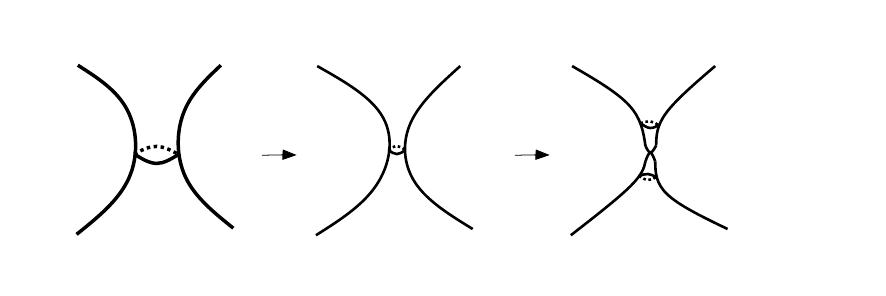}
\end{figure}   

Neck pinching phenomenon is very common in geometric analysis. For example, neck pinching phenomenon appear in geometric flows, such as mean curvature flow (see \cite{gang2009neck}) and Ricci flow (see \cite{angenent2004example}). In order to deal with the non-smoothness of the flow across these neck pinching points, one need to do surgery for the geometric flows. For example in \cite{perelman2002entropy} Perelman studied surgery of Ricci flows and in \cite{brendle2016mean} Brendle and Huisken studied surgery of mean curvature flows in $\mb R^3$.

\subsection{Organization of the Paper}
In section 2 we will prove the Choi-Schoen type curvature estimate for CMC surfaces. We will just follow Choi-Schoen's proof; A similar estimate for CMC surfaces in $\mb R^3$ appears in \cite{zhang2004curvature}.

In section 3 we discuss the linearized equation and the linearized operator. 

In section 4 we prove the main compactness theorem. We follow the idea by White \cite{white1987curvature} and Colding-Minicozzi \cite{colding2012smooth}.

In section 5 we present some touching examples of CMC surfaces. 

In section 6 as an application of the compactness theorem, we prove a lower bound for the first eigenvalue of CMC surfaces.

\subsection{Acknowledgement}
The author want to thank professor William Minicozzi and professor Xin Zhou for helpful discussions and comments. We also want to thank Jonathan Zhu for pointing out the possible multiplicity $2$ convergence when the limit is minimal.

\section{Curvature Estimate}
In this section we generalize the Choi-Schoen curvature estimate for minimal surfaces to CMC surfaces. We first need some tools.

\subsection{Tools for Curvature Estimate}
The first lemma is a Simon's type inequality for CMC surfaces. We need to keep track of the mean curvature term.

\begin{lemma}\label{lemma-CMC Simons inequality}
Suppose $\Sigma$ is a CMC surfaces with mean curvature $H$, $\vert H\vert\leq H_0$. Then 
\begin{equation}
\Delta_\Sigma\vert A\vert^2\geq -C(\delta^2+\vert A\vert^2)^2.
\end{equation}
where $C$ is a universal constant, and $\delta$ is quadratic under the rescaling of the $M$, i.e. suppose we rescale metric $g$ to $\tilde g=\sigma g$, then $\delta$ becomes $\tilde\delta=\sigma^{-1}\delta$.
\end{lemma}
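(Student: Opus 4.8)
The plan is to derive the CMC Simons-type inequality by starting from the classical Simons identity for the second fundamental form of a hypersurface in a general Riemannian manifold, and then specializing to the CMC condition. Recall that for a hypersurface $\Sigma^2 \subset M^3$, Simons' identity gives
\begin{equation*}
\Delta_\Sigma A = \Hess_\Sigma H + (\text{terms quadratic in } A) + (\text{terms linear in } A \text{ with coefficients from the ambient curvature } \oli R) + (\text{terms involving } \oli\nabla \oli R).
\end{equation*}
Taking the inner product with $A$ and using the Bochner formula $\Delta_\Sigma |A|^2 = 2\lb{\Delta_\Sigma A, A} + 2|\nabla_\Sigma A|^2$, I obtain a pointwise identity for $\Delta_\Sigma |A|^2$. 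The key point for the CMC case is that $H$ is constant, so $\Hess_\Sigma H = 0$ and that entire term drops out — this is why the mean curvature enters only algebraically (through the quadratic terms $|A|^2 H$, $H^2 |A|$, etc.) and not through any derivatives.

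Next I would bound each term on the right-hand side from below. The quadratic-in-$A$ terms are bounded below by $-c|A|^4$ for a universal $c$. The ambient curvature terms: since $M$ is fixed and compact, $|\oli R|$ and $|\oli\nabla \oli R|$ are bounded by a constant depending only on $M$; the terms linear in $A$ are then bounded below by $-c'|A|^2 - c'$ (absorbing the $|A|$ via Young's inequality into $|A|^2$ and a constant), and the $\oli\nabla \oli R$ term by a constant. The term $|\nabla_\Sigma A|^2 \ge 0$ is simply discarded. Collecting everything, and writing the fixed-ambient constant as $\delta^2$ so that $\Delta_\Sigma |A|^2 \ge -C(\delta^2 + |A|^2)^2$, gives the stated form. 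One then checks the scaling claim: under $\tilde g = \sigma g$, the second fundamental form scales as $|\tilde A|^2 = \sigma^{-1}|A|^2$, the Laplacian as $\tilde\Delta = \sigma^{-1}\Delta$, so $\tilde\Delta |\tilde A|^2 = \sigma^{-2} \Delta |A|^2$; matching powers forces $\tilde\delta^2 = \sigma^{-2}\delta^2$, i.e. $\tilde\delta = \sigma^{-1}\delta$. Concretely $\delta^2$ can be taken to be a bound on the ambient curvature quantities, which indeed has dimensions of (length)$^{-2}$.

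The main obstacle, such as it is, is bookkeeping: writing down Simons' identity correctly in the general ambient setting (as opposed to $\R^n$ or space forms), keeping careful track of which curvature terms are $\oli R$ versus $\oli\nabla \oli R$ and verifying each has the claimed scaling weight, and making sure the CMC condition is used in exactly the right place (the vanishing of $\Hess H$) so that no derivative of $H$ survives. There is no deep difficulty — this is the same computation Choi–Schoen and Schoen–Simon–Yau use, with the minimal-surface term $\Hess H = 0$ replaced by the CMC term $\Hess H = 0$ (still zero!) but with the algebraic $H$-terms retained and bounded using $|H| \le H_0$; since $H_0$ is a fixed constant these also get absorbed into $C$ and $\delta^2$. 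I would also remark that one could alternatively cite the version of Simons' inequality for CMC hypersurfaces appearing in \cite{zhang2004curvature} and adapt the constants, but carrying out the computation directly keeps the dependence on $M$ transparent.
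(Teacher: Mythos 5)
Your approach is sound, but it is worth noting that the paper does not actually carry out this computation: its entire proof of the lemma is a citation to Theorem~3.1 of Ilias--Nelli--Soret \cite{ilias2012caccioppoli}. What you propose is essentially the derivation underlying that cited result -- the general-ambient Simons identity, the observation that $\Hess_\Sigma H=0$ for CMC so no derivatives of $H$ survive, discarding $|\nabla_\Sigma A|^2\ge 0$, and absorbing the quadratic-in-$A$ terms, the $\oli R\cdot A$ terms, and the $\oli\nabla\oli R$ terms into $-C(\delta^2+|A|^2)^2$ via Young's inequality. Carrying this out explicitly is more transparent than the bare citation (and makes clear exactly which ambient quantities $\delta$ must dominate), at the cost of the bookkeeping you acknowledge.

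One concrete point needs fixing: your scaling check is internally inconsistent. You write that matching powers forces $\tilde\delta^2=\sigma^{-2}\delta^2$, i.e. $\tilde\delta=\sigma^{-1}\delta$ (which is what the lemma asserts), but then say $\delta^2$ is a curvature bound of dimension $(\text{length})^{-2}$. Those two statements contradict each other: if $\delta^2$ scales like curvature then $\tilde\delta^2=\sigma^{-1}\delta^2$, i.e. $\tilde\delta=\sigma^{-1/2}\delta$, whereas scale-invariance of $\Delta_\Sigma|A|^2\ge -C(\delta^2+|A|^2)^2$ (with $\tilde\Delta|\tilde A|^2=\sigma^{-2}\Delta|A|^2$ and $|\tilde A|^2=\sigma^{-1}|A|^2$) requires $\delta^2$ to scale exactly like $|A|^2$, i.e. $\tilde\delta=\sigma^{-1/2}\delta$, not $\sigma^{-1}\delta$. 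Moreover the term $|\oli\nabla\oli R|\,|A|$ has weight $(\text{length})^{-7/2}$ after Cauchy--Schwarz and is not homogeneous of the same degree, so it can only be absorbed into $C(\delta^2+|A|^2)^2$ if $\delta$ is defined to dominate the appropriate powers of both $|\oli R|$ and $|\oli\nabla\oli R|$ (e.g. $\delta^2\ge |\oli R|+|\oli\nabla\oli R|^{2/3}$ makes everything dimensionally consistent with $\tilde\delta=\sigma^{-1/2}\delta$). You should pin down the definition of $\delta$ before asserting its scaling; as written, the dimensional analysis in your last step does not close.
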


\begin{proof}
See \cite{ilias2012caccioppoli} Theorem 3.1. 
\end{proof}

Next lemma generalizes the monotonicity formula for minimal surfaces to CMC surfaces.

\begin{lemma}\label{lemma-CMC monotonicity}
Suppose $M$ is a closed three manifold with sectional curvature bounded by $k$ and injective radius bounded from below by $i_0$. Suppose $\Sigma\subset M$ is a CMC surfaces with mean curvature $H$, $\vert H\vert\leq H_0$. Suppose $f$ is a function on $\Sigma$ satisfies $\Delta_\Sigma f\geq -\lambda t^{-2}f$, where $\lambda$ is a fixed constant and $t<\min\{i_0,1/\sqrt{k}\}$. Then we have
\begin{equation}
f(x_0)\leq \frac{e^{\lambda+C(H_0,k) t/2}}{\pi}\int_{B_t(x_0)\cap\Sigma}f.
\end{equation}
\end{lemma}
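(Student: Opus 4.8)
The plan is to follow Choi--Schoen's proof of the mean value inequality for minimal surfaces (\cite{choi1985space}), checking that the extra term coming from the mean curvature does no harm. Write $r(x)=\dist_M(x,x_0)$; since $t<\min\{i_0,1/\sqrt k\}$, the distance $r$ is smooth on $B_t(x_0)$ and the Hessian comparison theorem applies there. The basic identity is that for any surface $\Sigma\subset M$ and any $\phi\colon M\to\R$,
\[
\Delta_\Sigma\phi=\operatorname{tr}_\Sigma\Hess_M\phi+\langle\nabla_M\phi,\vec H\rangle,
\]
where $\vec H=H\bn$ is the mean curvature vector of $\Sigma$. Applying this with $\phi=r^2/2$: Hessian comparison gives $\Hess_M(r^2/2)=g+O(kr^2)$, hence $\operatorname{tr}_\Sigma\Hess_M(r^2/2)=2+O(kr^2)$, while the genuinely new term is $\langle\nabla_M(r^2/2),\vec H\rangle=r\,\langle\nabla_M r,H\bn\rangle$, whose absolute value is at most $H_0 r$. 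Since $kr^2\le\sqrt k\,r$ for $r\le t$, this yields
\[
\bigl|\Delta_\Sigma(r^2/2)-2\bigr|\le C(H_0,k)\,r\qquad\text{on }B_t(x_0)\cap\Sigma.
\]
The key observation is that the mean curvature error $H_0 r$ is of the same order as the sectional curvature error and, in particular, tends to $0$ as $r\to0$.

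With this estimate in hand I would run the Choi--Schoen argument with essentially no change. Assume, as is implicit, that $f\ge0$, and that $B_t(x_0)\cap\Sigma$ contains no part of $\partial\Sigma$ (automatic when $\Sigma$ is closed). Set $V(\sigma)=\int_{B_\sigma(x_0)\cap\Sigma}f$. Using the coarea formula $V'(\sigma)=\int_{\partial B_\sigma(x_0)\cap\Sigma}f/|\nabla_\Sigma r|$, integrating $\Delta_\Sigma(r^2)$ by parts on $B_\sigma(x_0)\cap\Sigma$ against $f$, inserting the bound for $\Delta_\Sigma(r^2/2)$ above, and using the subsolution hypothesis $\Delta_\Sigma f\ge-\lambda t^{-2}f$, one obtains a differential inequality showing that
\[
\sigma\longmapsto e^{h(\sigma)}\,\sigma^{-2}\!\int_{B_\sigma(x_0)\cap\Sigma}f
\]
is non-decreasing on $(0,t]$, where $h(\sigma)=c_1\lambda\,\sigma^2t^{-2}+c_2\,C(H_0,k)\,\sigma$ collects the contributions of the $\lambda t^{-2}f$ term and of the curvature/CMC error. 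The constants $c_1,c_2$ are absolute and may be chosen so that $h(t)=\lambda+\tfrac12 C(H_0,k)t$ after relabelling $C(H_0,k)$.

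To finish, let $\sigma\to0$. Since $\Sigma$ is a smooth $2$-dimensional surface through $x_0$, it is $C^1$-close to its tangent plane at $x_0$ on small balls, so $\area\bigl(B_\sigma(x_0)\cap\Sigma\bigr)=\pi\sigma^2+o(\sigma^2)$ and, by continuity of $f$,
\[
\sigma^{-2}\!\int_{B_\sigma(x_0)\cap\Sigma}f\ \longrightarrow\ \pi f(x_0)\qquad(\sigma\to0).
\]
Comparing this limit with the monotone quantity at $\sigma=t$ gives $\pi f(x_0)\le e^{h(t)}t^{-2}\int_{B_t(x_0)\cap\Sigma}f$, which is the asserted inequality (with the scaling factor $t^{-2}$ and the absolute constants absorbed into $C(H_0,k)$).

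The step I expect to be the main obstacle is the bookkeeping in the second paragraph: one must carry the extra term $\langle\nabla_M\phi,\vec H\rangle$ through all of the integrations by parts and coarea manipulations of Choi--Schoen and verify that it contributes only the factor $e^{C(H_0,k)t/2}$, with no blow-up as $\sigma\to0$. This works precisely because the CMC error is $O(H_0 r)$, which is integrable against $d\sigma/\sigma$ near $0$; hence the monotone quantity still has the finite limit $\pi f(x_0)$. The remaining points — Sard's theorem so that $\partial B_\sigma(x_0)\cap\Sigma$ is smooth for a.e.\ $\sigma$, and the reduction to $f>0$ by adding a small constant — are routine and exactly as in the minimal surface case.
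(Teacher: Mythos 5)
Your proposal is correct and follows essentially the same route as the paper: Laplacian/Hessian comparison plus the tangential trace identity to get $\vert\Delta_\Sigma r^2-4\vert\le C(H_0,k)\,r$ (the mean curvature contributing the extra $O(H_0r)$ term), then the Choi--Schoen monotonicity of $e^{h(\sigma)}\sigma^{-2}\int_{B_\sigma\cap\Sigma}f$ and the limit $\sigma^{-2}\int_{B_\sigma\cap\Sigma}f\to\pi f(x_0)$. Your explicit flagging of the implicit hypothesis $f\ge0$ (needed both for the discarded boundary term and for bounding $(s^2-r^2)\Delta_\Sigma f$ from below) is a point the paper leaves unstated.
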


Before we prove this lemma, let us state a lemma of the famous Laplacian comparison theorem in three manifold. See \cite{colding2011course} Chapter 7 Lemma 7.1 for proof.

\begin{lemma}\label{lemma-Laplacian comparison}
Suppose $M$ is a closed three manifold with sectional curvature bounded by $k$ and injective radius bounded from below by $i_0$. Let $x\in M$ be a fixed point, and $r$ is the distance function from $x$. Then for $r<\min\{i_0,1/\sqrt{k}\}$ and any vector $X$ with $\vert X\vert=1$,
\begin{equation}
\vert\Hess_r(X,X)-\frac{1}{r}\langle X-\langle X,Dr\rangle Dr,X-\langle X,Dr\rangle Dr\rangle\vert\leq\sqrt{k}.
\end{equation}
Here $D$ is the gradient on $M$.
\end{lemma}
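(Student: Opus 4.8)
The plan is to recognize $\Hess_r$ restricted to the orthogonal complement of $Dr$ as the shape operator of a geodesic sphere, and then to invoke the classical Riccati (Sturm) comparison against the constant curvature model spaces of curvature $\pm k$; the only delicate point will be pinning down the sharp constant $\sqrt k$. Throughout, "sectional curvature bounded by $k$" is read as $-k\le K\le k$.

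First I would reduce to the purely radial picture. Fix a point $q$ with $r(q)<\min\{i_0,1/\sqrt k\}$. Since $r(q)$ is below the injectivity radius, there is a unique unit speed minimizing geodesic $\gamma$ from $x$ to $q$, $r$ is smooth near $q$, and $Dr(q)=\gamma'(r(q))$. Because $|Dr|\equiv 1$ on the smooth locus, $\Hess_r(Dr,\cdot)=\tfrac12 D|Dr|^2=0$, so writing $X^\perp:=X-\langle X,Dr\rangle Dr$ one has $\Hess_r(X,X)=\Hess_r(X^\perp,X^\perp)$, while $\langle X-\langle X,Dr\rangle Dr,\,X-\langle X,Dr\rangle Dr\rangle=|X^\perp|^2\in[0,1]$. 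Thus the statement reduces to bounding $\Hess_r$ on $(Dr)^\perp$, which is precisely the second fundamental form of the geodesic sphere $\partial B_{r(q)}(x)$ at $q$ with respect to the outward normal $Dr$.

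Second, I would run the Riccati comparison. The shape operator $S(t)$ of $\partial B_t(x)$ along $\gamma$ satisfies the matrix Riccati equation $S'+S^2+\mathcal R_\gamma=0$ with $S(t)=t^{-1}\mathrm{Id}+O(t)$ as $t\to 0^+$, where the curvature operator $\mathcal R_\gamma(t)Y=R(Y,\gamma'(t))\gamma'(t)$ has eigenvalues equal to sectional curvatures, hence lying in $[-k,k]$. Since $r(q)<1/\sqrt k<\pi/\sqrt k$ (and $r(q)<i_0$), there is no conjugate point of $x$ along $\gamma$, so the scalar Sturm comparison applied to the smallest and largest eigenvalues of $S$ gives $\sqrt k\cot(\sqrt k\,t)\,\mathrm{Id}\le S(t)\le\sqrt k\coth(\sqrt k\,t)\,\mathrm{Id}$ for $0<t<1/\sqrt k$. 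Contracting at $t=r(q)$ with $X^\perp$ yields $\sqrt k\cot(\sqrt k r)\,|X^\perp|^2\le\Hess_r(X,X)\le\sqrt k\coth(\sqrt k r)\,|X^\perp|^2$.

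Finally I would subtract $r^{-1}|X^\perp|^2$ and clean up. For $0<r<\pi/\sqrt k$ one has $\sqrt k\cot(\sqrt k r)-r^{-1}<0<\sqrt k\coth(\sqrt k r)-r^{-1}$, so with $0\le|X^\perp|^2\le 1$ the difference $\Hess_r(X,X)-r^{-1}|X^\perp|^2$ lies between $\sqrt k\cot(\sqrt k r)-r^{-1}$ and $\sqrt k\coth(\sqrt k r)-r^{-1}$. The proof then comes down to the two scalar inequalities $\sqrt k\coth(\sqrt k r)-r^{-1}\le\sqrt k$ and $r^{-1}-\sqrt k\cot(\sqrt k r)\le\sqrt k$ on $(0,1/\sqrt k)$; substituting $s=\sqrt k r\in(0,1)$ these become $\coth s-s^{-1}\le 1$ (which rearranges to $2s<e^{2s}-1$, valid for all $s>0$) and $s^{-1}-\cot s\le 1$ (valid on $(0,1)$ since $s^{-1}-\cot s$ has a power series with positive coefficients, hence is increasing on $(0,\pi)$, and equals $1-\cot 1<1$ at $s=1$). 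The main obstacle, modest as it is, lies exactly in this last step: obtaining the constant $\sqrt k$ on the nose, rather than some larger multiple, forces one to verify precisely these two sharp elementary estimates, and one must also keep track of the fact that the Sturm comparison is legitimately available (no conjugate points), which the hypothesis $r<\min\{i_0,1/\sqrt k\}$ is tailored to guarantee.
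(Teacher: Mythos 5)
Your argument is correct, and it is a genuine proof rather than what the paper does: the paper does not prove this lemma at all, it simply cites Colding--Minicozzi (\cite{colding2011course}, Chapter 7, Lemma 7.1), and your write-up essentially reconstructs the standard comparison-theory proof that lies behind that citation. The reduction to $\Hess_r(X^\perp,X^\perp)$ via $\Hess_r(Dr,\cdot)=0$ is right, the two-sided bound $\sqrt{k}\cot(\sqrt{k}r)\,|X^\perp|^2\le\Hess_r(X,X)\le\sqrt{k}\coth(\sqrt{k}r)\,|X^\perp|^2$ under $-k\le K\le k$ is the classical Hessian comparison theorem (valid here since $r<\min\{i_0,1/\sqrt{k}\}$ rules out cut and conjugate points), and your two elementary inequalities $\coth s-s^{-1}\le 1$ (equivalent to $2s\le e^{2s}-1$) and $s^{-1}-\cot s\le 1$ on $(0,1)$ are both verified correctly, as is the observation that multiplying by $|X^\perp|^2\in[0,1]$ preserves the enclosing interval because the lower coefficient is negative and the upper positive. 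The only loose phrase is ``the scalar Sturm comparison applied to the smallest and largest eigenvalues of $S$'': eigenvalues of a matrix Riccati solution do not individually satisfy scalar Riccati equations; the upper bound does follow from a scalar argument (using $\langle S^2E,E\rangle\ge\langle SE,E\rangle^2$), but the lower bound under $K\le k$ needs the matrix Riccati comparison of Eschenburg--Heintze or an index-form/Rauch argument. Since what you are invoking is exactly the textbook Hessian comparison theorem, this is a presentational imprecision rather than a gap; citing that theorem directly (or the matrix comparison) would make the step airtight.
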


\begin{proof}[proof of Lemma \ref{lemma-CMC monotonicity}]
Let $y\in\Sigma$ be a point such that $r(y)<\min\{i_0,1/\sqrt{k}\}$. We choose a local orthonormal frame $E_1,E_2$. Then by Laplacian comparison Lemma \ref{lemma-Laplacian comparison}, we have
\begin{equation}
\vert\Hess_r(E_1,E_1)-\frac{1}{r}\langle E_1-\langle E_1,Dr\rangle Dr,E_1-\langle E_1,Dr\rangle Dr\rangle\vert\leq\sqrt{k},
\end{equation}
\begin{equation}
\vert\Hess_r(E_2,E_2)-\frac{1}{r}\langle E_2-\langle E_2,Dr\rangle Dr,E_2-\langle E_2,Dr\rangle Dr\rangle\vert\leq\sqrt{k}.
\end{equation}
Add these two inequalities and note $\Sigma$ is a CMC surface, we get (Compare to minimal surfaces case in Colding-Minicozzi \cite{colding2011course} Chapter 7 (7.2))
\begin{equation}
\vert\Delta_\Sigma r^2-4-\langle\nabla^\bot r^2,H\mathbf{n}\rangle\vert\leq 4\sqrt{k}r.
\end{equation}
Note $\vert Dr\vert\leq1$, we get
\begin{equation}\label{equation-Hessian comparison}
\vert\Delta_\Sigma r^2-4\vert\leq (4\sqrt{k}+2H_0)r=\alpha r.
\end{equation}
where $C$ only depends on $k,H_0$.
Let us define 
\[F(s)=\frac{1}{s^2}\int_{B_s(x_0)\cap\Sigma}f.\]
We can differentiate it for almost every $s<t$ 
\begin{equation}
F'(s)=-\frac{2}{s^3}\int_{B_s(x_0)\cap\Sigma}f+\frac{1}{s^2}\int_{\partial B_s(x_0)\cap\Sigma}\frac{f}{\vert\nabla_\Sigma r\vert}.
\end{equation}
Here we use the co-area formula, see \cite{colding2011course} page 24 (1.59). Let us estimate the first term on the right hand side. Using inequality (\ref{equation-Hessian comparison}) and integrating by parts gives
\begin{equation}
\begin{split}
-\frac{2}{s^3}\int_{B_s(x_0)\cap\Sigma}f&=-\frac{1}{2s^3}\int_{B_s(x_0)\cap\Sigma}4f\\
&\geq -\frac{1}{2s^3}\int_{B_s(x_0)\cap\Sigma}\Delta_\Sigma(r^2-s^2)f-\frac{1}{2s^3}\int_{B_s(x_0)\cap\Sigma}\alpha rf\\
&=-\frac{1}{2s^3}\int_{B_s(x_0)\cap\Sigma}(r^2-s^2)\Delta_\Sigma f-\frac{1}{2s^3}\int_{\partial B_s(x_0)\cap\Sigma}\nabla_\Sigma(r^2)f-\frac{1}{2s^3}\int_{B_s(x_0)\cap\Sigma}\alpha rf.
\end{split}
\end{equation}

So we get the following inequality
\begin{equation}
\begin{split}
F'(s)&\geq \frac{1}{2s^3}\int_{B_s(x_0)\cap\Sigma}(s^2-r^2)\Delta_\Sigma f+\frac{1}{s^2}\int_{\partial B_s(x_0)\cap\Sigma} \frac{1-\vert\nabla_\Sigma r\vert^2}{\vert\nabla_\Sigma r\vert}f-\frac{\alpha}{2} F(s)\\
&\geq \frac{1}{2s^3}\int_{B_s(x_0)\cap\Sigma}(s^2-r^2)\Delta_\Sigma f-\frac{\alpha}{2}F(s)\\
&\geq -\frac{1}{2s^3}\int_{B_s(x_0)\cap\Sigma}(s^2-r^2)t^{-2}\lambda f-\frac{\alpha}{2}F(s)\\
&\geq -\frac{\lambda}{t}F(s)-\frac{\alpha}{2}F(s).
\end{split}
\end{equation}
In conclusion, $e^{(\frac{\lambda}{t}+\frac{\alpha}{2})s}F(s)$ is monotone non-decreasing. Then we can conclude that
\begin{equation}
f(x_0)\leq \frac{e^{C+\alpha t/2}}{\pi}\int_{B_t(x_0)\cap\Sigma}f.
\end{equation}

\end{proof}

\subsection{Choi-Schoen Type Estimate}
\begin{theorem}\label{theorem-Choi-Schoen}
Let $M$ be a $3$ dimensional manifold. Let $p\in M$ and $r>0$ such that $B_r(p)$ has compact closure in $M$. Suppose $\Sigma$ is a compact immersed CMC surface with mean curvature $H$ in $M$ such that $B_r(p)\cap\partial \Sigma=\emptyset$. Here $\vert H\vert\leq H_0$. Then there exists $\eps_0>0$ depending on the geometry of $B_r(p)$ and $H_0$ such that if 
\[\int_{\Sigma\cap B_r(p)}\vert A\vert^2\leq\eps_0.\]
and $r\leq\eps_0$, then
\begin{equation}
\max_{0\leq\sigma\leq r}\sigma^2\sup_{B_{r-\sigma}(p)}\vert A\vert^2\leq C=C(H_0,B_{r}(p)).
\end{equation}
\end{theorem}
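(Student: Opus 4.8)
The plan is to follow the contradiction-and-blowup scheme of Choi–Schoen, adapted to the CMC setting by tracking the extra mean-curvature terms that already appear in Lemma \ref{lemma-CMC Simons inequality} and Lemma \ref{lemma-CMC monotonicity}. Set $F(\sigma) = \sigma^2 \sup_{B_{r-\sigma}(p)} |A|^2$ for $\sigma \in [0,r]$. This is a continuous function vanishing at $\sigma = r$, so it attains its maximum at some $\sigma_0 \in [0,r)$. Suppose for contradiction that this maximum is larger than some large constant to be determined; then at the point $x_0 \in B_{r-\sigma_0}(p)$ where $\sup |A|^2$ is (almost) attained, the value $e_0 := |A|^2(x_0)$ is large, and one argues by a standard doubling/covering argument that on the ball $B_{\rho}(x_0)$ with $\rho$ comparable to $e_0^{-1/2}$, one has $|A|^2 \le 4 e_0$ throughout. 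The point is that on this ball we control the full second fundamental form by $4e_0$.

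Next I would rescale. Blow up the metric $g$ by the factor $\sigma = e_0$, so that at $x_0$ the rescaled surface has $|\tilde A|^2 = 1$ and on the rescaled ball of radius $\sim \sqrt{N}$ (where $N = \sigma_0^2 e_0/4$ is large) the bound $|\tilde A|^2 \le 4$ holds. Under this rescaling the ambient sectional curvature bound, the injectivity radius lower bound, and the quantity $\delta$ in Lemma \ref{lemma-CMC Simons inequality} all scale to be small (since $\delta \to \sigma^{-1}\delta$, and $H_0 \to \sigma^{-1/2} H_0$), so the rescaled surface behaves, on balls of fixed radius, like a CMC surface in nearly-Euclidean space with $|\tilde A|$ bounded and $\int |\tilde A|^2$ still $\le \eps_0$ (the $L^2$ norm of $|A|^2$ is scale-invariant in dimension two). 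Then apply the CMC Simons-type inequality: $\Delta_\Sigma |\tilde A|^2 \ge -C(\delta^2 + |\tilde A|^2)^2 \ge -C'$ on the rescaled ball, which rearranges to $\Delta_\Sigma (|\tilde A|^2) \ge -\Lambda t^{-2} |\tilde A|^2$ in the form needed to feed into the monotonicity Lemma \ref{lemma-CMC monotonicity} applied to $f = |\tilde A|^2$ (using $|\tilde A|^2 \ge$ a definite fraction near $x_0$ to convert the additive constant into a multiple of $f$, or alternatively applying a mean-value inequality directly). This yields $1 = |\tilde A|^2(x_0) \le \frac{C}{\pi}\int_{B_1(x_0) \cap \tilde\Sigma} |\tilde A|^2 \le \frac{C}{\pi}\int_{\Sigma \cap B_r(p)} |A|^2 \le \frac{C}{\pi}\eps_0$, using scale-invariance of the total curvature; choosing $\eps_0$ small enough forces $1 \le 1/2$, a contradiction.

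The delicate points I expect to need care are: (a) verifying that after rescaling by $e_0$ the hypotheses of Lemma \ref{lemma-CMC monotonicity} genuinely hold on a ball of radius bounded below independent of $i$, which requires $e_0^{-1/2}$ to be smaller than $\min\{i_0, 1/\sqrt k\}$ after rescaling — this is where $r \le \eps_0$ and the largeness of the maximum of $F$ are used, since $\rho \sim e_0^{-1/2} \le \sigma_0 \le r \le \eps_0$; (b) turning the Simons inequality $\Delta |\tilde A|^2 \ge -C'$ into the differential inequality $\Delta f \ge -\Lambda t^{-2} f$ needed by the monotonicity lemma — the cleanest route is to note that on the good ball we may assume $|\tilde A|^2$ is comparable to $1$ near $x_0$ after a further small shrinking, or to run the monotonicity argument with the constant absorbed; (c) bookkeeping the scaling of $H_0$ and $\delta$ so that the constant $C(H_0,k)$ appearing in Lemma \ref{lemma-CMC monotonicity} stays bounded under the blowup. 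The main obstacle is step (b) together with keeping all rescaled constants uniform: the minimal-surface proof of Choi–Schoen gets $\Delta |A|^2 \ge -c|A|^4$ with no lower-order term, and here the presence of $\delta^2$ (which carries ambient curvature) and the $H$ terms must be shown to be harmless precisely because they are subcritical under the parabolic-type rescaling $g \mapsto e_0 g$.

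\begin{proof}[Proof sketch to be expanded]
We argue by contradiction along the lines above. Suppose the conclusion fails for every constant; then there is a sequence of CMC surfaces $\Sigma_i$ in $B_r(p)$ with $\int_{\Sigma_i \cap B_r(p)} |A|^2 \le \eps_0$, $r \le \eps_0$, and
\[
F_i(\sigma) := \sigma^2 \sup_{B_{r-\sigma}(p)} |A_{\Sigma_i}|^2
\]
satisfying $\max_\sigma F_i(\sigma) \to \infty$. Let $\sigma_i$ realize this maximum and let $x_i$ be a point where $\sup_{B_{r-\sigma_i}(p)} |A_{\Sigma_i}|^2$ is essentially attained, with $e_i := |A_{\Sigma_i}|^2(x_i) \to \infty$. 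A standard doubling argument gives $\rho_i \le \sigma_i/2$ with $\rho_i^2 e_i \to \infty$ and $\sup_{B_{\rho_i}(x_i) \cap \Sigma_i} |A|^2 \le 4 e_i$. Rescale the ambient metric by $e_i$; the rescaled surfaces $\tilde\Sigma_i$ have $|\tilde A|^2(x_i) = 1$, $|\tilde A|^2 \le 4$ on the rescaled ball of radius $\tilde\rho_i = \rho_i \sqrt{e_i} \to \infty$, ambient curvature and $\delta$ tending to $0$, and $\int_{\tilde\Sigma_i \cap B_{\tilde\rho_i}(x_i)} |\tilde A|^2 \le \eps_0$ by scale-invariance of the two-dimensional total curvature. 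Lemma \ref{lemma-CMC Simons inequality} gives $\Delta_{\tilde\Sigma_i} |\tilde A|^2 \ge -C$ uniformly on a fixed ball, and feeding this (after the elementary reduction to the form $\Delta f \ge -\Lambda t^{-2} f$) into the monotonicity estimate of Lemma \ref{lemma-CMC monotonicity} yields
\[
1 = |\tilde A_{\Sigma_i}|^2(x_i) \le \frac{C}{\pi} \int_{B_1(x_i) \cap \tilde\Sigma_i} |\tilde A|^2 \le \frac{C}{\pi}\, \eps_0,
\]
which is a contradiction once $\eps_0 < \pi/C$. Hence such a constant $C = C(H_0, B_r(p))$ exists.
\end{proof}
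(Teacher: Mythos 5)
Your proposal follows essentially the same route as the paper: maximize $\sigma^2\sup_{B_{r-\sigma}(p)}|A|^2$, select the maximizing point, rescale the ambient metric by the curvature there, and combine the CMC Simons inequality with the CMC monotonicity formula and the scale invariance of $\int|A|^2$ to force a contradiction once $\eps_0$ is small. The one point you flag as delicate, your step (b), is resolved in the paper by applying Lemma \ref{lemma-CMC monotonicity} to $u=\delta^2+|\tilde A|^2$ rather than to $|\tilde A|^2$ itself, so that $\Delta_\Sigma u=\Delta_\Sigma|\tilde A|^2\geq -C(\delta^2+|\tilde A|^2)^2\geq -C(\delta^2+4)\,u$ holds with no lower bound on $|\tilde A|^2$ required; note that your first suggested fix (that $|\tilde A|^2$ stays comparable to $1$ near the blow-up point) is not available, since only the upper bound $|\tilde A|^2\leq 4$ is known on the rescaled ball.
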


\begin{proof}
We follow the idea of Choi-Schoen. Since $\sigma^2\sup_{B_{r-\sigma}(p)}\vert A\vert^2$ vanishes on $\partial B_r$, the supremum of $\sigma^2\sup_{B_{r-\sigma}(p)}\vert A\vert^2$ must be achieved inside $B_r$. Let $\sigma_0$ be the number so that 
\[\sigma_0^2\sup_{B_{r-\sigma_0}(p)}\vert A\vert^2=\max_{0\leq\sigma\leq r}\sigma^2\sup_{B_{r-\sigma}(p)}\vert A\vert^2.\]
and let $q\in B_{r-\sigma_0}(p)$ be chosen to that 
\[\vert A\vert^2(q)=\sup_{B_{r-\sigma_0}(q)}\vert A\vert^2.\]
Then
\begin{equation}\label{equation-bounded A in choi schoen}
\sup_{B_{\frac{1}{2}\sigma_0}(q)}\vert A\vert^2\leq 4\vert A\vert^2(q).
\end{equation}
If $\sigma_0^2\vert A\vert^2(q)\leq 4$ then the inequality holds. So we only need to consider the case $\sigma_0^2\vert A\vert^2(q)> 4$. Then we rescale the metric $ds^2$ on $M$ by setting $\tilde ds^2=\vert A\vert^2(q)ds^2$, and we denote the balls and the quantities under rescaled metric with tilde. $\sigma_0^2\vert A\vert^2(q)> 4$ implies that $\partial \Sigma\cap\tilde B_1(q)=\emptyset$. Inequality (\ref{equation-bounded A in choi schoen}) implies
\[\sup_{\tilde B_1(q)}\vert\tilde A\vert^2\leq 4.\]

By Simons' inequality Lemma \ref{lemma-CMC Simons inequality}

\[\tilde\Delta_\Sigma\vert \tilde A\vert^2\geq -C(\delta^2+\vert A\vert^2)^2.\]

Note here $\delta^2\leq C\sigma_0^2\leq C\eps_0^2$. Together with inequality $\sup_{\tilde B_1(q)}\vert\tilde A\vert^2\leq 4$ we get
\[\tilde\Delta_\Sigma u\geq -Cu\mbox{ on }\tilde B_1(q),\]
where $u=\delta^2+\vert \tilde A\vert^2$ and $C$ here is a universal constant. So monotonicity formula Lemma \ref{lemma-CMC monotonicity} gives
\begin{equation}
u(x_0)\leq \frac{e^{C+\tilde\alpha/2}}{\pi}\int_{\tilde B_1(q)\cap\Sigma}u.
\end{equation}
Note $\tilde\alpha\leq \alpha\sigma_0\leq\alpha\eps_0$
\begin{equation}
\vert A\vert^2(q)\leq u(q)\leq \frac{e^{C+\alpha\eps_0/2}}{\pi}\int_{\tilde B_1(q)\cap\Sigma}(\delta^2+\vert A\vert^2)\leq C\eps_0.
\end{equation}
where we use the conformal invariant of the integral of $\vert A\vert^2$ and the area bound of CMC surface. If $\eps_0$ small enough, we get a contradiction since $\vert\tilde A\vert^2(q)=1$. Thus we finish the proof.

\end{proof}

\section{Linearized Equation}
Let $\Sigma$ be a CMC surface in $M$. Let us define a differential operator $L$ such that
\begin{equation}
Lu=\Delta_\Sigma u+\Ric(\mathbf{n},\mathbf{n})u+\vert A\vert^2u.
\end{equation}
We call $L$ is the {\bf linearized operator}. In this section we study some properties of this operator. 
\subsection{Difference of Two CMC Surfaces}
Let $M$ be a $3$ dimensional manifold. Suppose $\Sigma_1,\Sigma_2\subset M$ are two constant mean curvature surfaces, with mean curvature $H_1,H_2$ respectively.

\begin{theorem}\label{difference}
Let $\Sigma_1,\Sigma_2$ be two CMC surfaces with constant mean curvature $H_1,H_2$ respectively. Suppose $\Sigma_2$ is a graph over $\Sigma_1$, i.e.
\[\Sigma_2=\{x+\varphi\mathbf{n}:x\in\Sigma_1\}.\]
Then $\varphi$ satisfies the second order elliptic equation

\begin{equation}
Lu-(H_2-H_1)=\dv(a\nabla\varphi)+b\cdot\nabla \varphi+c\varphi.
\end{equation}
Here $a,b,c$ turns to $0$ as $\Vert\varphi\Vert_{C^2}$ goes to $0$.
\end{theorem}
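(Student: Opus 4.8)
The plan is to derive the equation governing $\varphi$ by writing the mean curvature of a normal graph as a nonlinear second-order operator applied to $\varphi$, and then expanding that operator around $\varphi \equiv 0$. First I would set up the graph map $\Phi_\varphi(x) = \exp_x(\varphi(x)\mathbf{n}(x))$ from $\Sigma_1$ to $M$, and compute the induced metric $g_\varphi$, the unit normal $\mathbf{n}_\varphi$, and the second fundamental form $A_\varphi$ of $\Sigma_2 = \Phi_\varphi(\Sigma_1)$ in terms of $\varphi$, $\nabla\varphi$, $\nabla^2\varphi$ and the geometry of $\Sigma_1$ and $M$ (the second fundamental form $A$ of $\Sigma_1$, the ambient curvature along the normal geodesic, etc.). The mean curvature is then $H_\varphi = g_\varphi^{ij}(A_\varphi)_{ij} =: \mathcal{M}(\varphi)$, a quasilinear second-order elliptic operator in $\varphi$ whose coefficients depend smoothly on $(\varphi, \nabla\varphi)$ and on the background data.

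Next I would compute the linearization $D\mathcal{M}|_{0}$. This is the classical Jacobi-type computation: the first variation of mean curvature in the normal direction $\varphi\mathbf{n}$ is $\Delta_{\Sigma_1}\varphi + (\vert A\vert^2 + \Ric(\mathbf{n},\mathbf{n}))\varphi = L\varphi$, which is exactly the linearized operator defined at the start of this section. So Taylor-expanding, $\mathcal{M}(\varphi) = H_1 + L\varphi + Q(\varphi)$, where $Q$ collects all the higher-order terms. Since $\Sigma_2$ has constant mean curvature $H_2$, we have $\mathcal{M}(\varphi) = H_2$, hence $L\varphi - (H_2 - H_1) = -Q(\varphi)$. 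The remaining task is to organize the remainder $-Q(\varphi)$ into divergence form $\dv(a\nabla\varphi) + b\cdot\nabla\varphi + c\varphi$ with $a, b, c \to 0$ as $\Vert\varphi\Vert_{C^2}\to 0$; this follows because $\mathcal{M}$ is a smooth quasilinear operator, so $\mathcal{M}(\varphi) = \dv\big(\mathbf{a}(x,\varphi,\nabla\varphi)\big) + \mathbf{b}(x,\varphi,\nabla\varphi)$ in divergence form, and one subtracts off the value and differential at $\varphi = 0$: writing $\mathbf{a}(x,\varphi,\nabla\varphi) - \mathbf{a}(x,0,0) - D_p\mathbf{a}(x,0,0)\nabla\varphi$ as $a\nabla\varphi$ plus terms absorbable into $c\varphi$ (using that these differences vanish to first order, via the fundamental theorem of calculus in the $(\varphi,\nabla\varphi)$ variables), and similarly for the lower-order part. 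Each coefficient so produced carries at least one factor that is an increment of a smooth function of $(\varphi,\nabla\varphi)$ over its value at the origin, hence tends to $0$ with $\Vert\varphi\Vert_{C^2}$.

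The main obstacle I expect is purely bookkeeping rather than conceptual: keeping the divergence structure intact while separating the exact linear part $L\varphi$ from the remainder. One has to be careful that the metric coefficients $g_\varphi^{ij}\sqrt{\det g_\varphi}$ sitting inside the divergence are themselves functions of $\nabla\varphi$, so the naive expansion produces terms like $(g_\varphi^{ij}\sqrt{g_\varphi} - g_1^{ij}\sqrt{g_1})\partial_j\varphi$ that must be recognized as contributing to $a\nabla\varphi$, while cross terms such as $\partial_i(\text{coefficient})\cdot\partial_j\varphi$ need to be routed into $b\cdot\nabla\varphi$; ambient-curvature and $\vert A\vert^2$-type terms multiplied by extra powers of $\varphi$ go into $c\varphi$. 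A clean way to present this, which I would adopt, is to invoke the general principle that any smooth quasilinear operator $\mathcal{N}(\varphi)$ with $\mathcal{N}(0) = H_1$ can be written, after subtracting its linearization, as $\mathcal{N}(\varphi) - H_1 - L\varphi = \dv(a\nabla\varphi) + b\cdot\nabla\varphi + c\varphi$ with $a,b,c$ vanishing as $\varphi\to 0$ in $C^2$ — and then simply verify that $\mathcal{M}$ has this form, citing the explicit first-variation formula for the linearization. I would keep the explicit expressions for $g_\varphi$, $A_\varphi$ in an auxiliary computation and not grind through them in the main text.
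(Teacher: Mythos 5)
Your proposal is correct and follows essentially the same route as the paper's appendix: write the mean curvature of the normal graph as a nonlinear second-order operator $\mathcal{M}(\varphi)$ (the paper does this explicitly in Fermi coordinates), identify its linearization at $\varphi=0$ with $L$, and use the fundamental theorem of calculus in the $(\varphi,\nabla\varphi,\nabla^2\varphi)$ variables to exhibit the remainder with coefficients vanishing as $\Vert\varphi\Vert_{C^2}\to 0$. Your extra care about preserving the divergence structure of the top-order part is, if anything, slightly more scrupulous than the paper's own bookkeeping.
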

This can be viewed as a non-infinitesimal version of second variational formula. We left the computations in the appendix. Intuitively, one can image the second order variational formula gives the second order derivative of minimal surfaces, and the difference formula here gives the Taylor expansion of the minimal surfaces up to second order. In particular, if we let $\varphi\to 0$, we will again get the second variational formula.

\subsection{Stability of Linearized Operator}
Suppose $\Sigma$ is a CMC surface. We say the linearized operator $L$ of $\Sigma$ is {\bf stable} if for any function $u$ on $\Sigma$,

\begin{equation}
\int_{\Sigma}uLu\leq0.
\end{equation}

Otherwise we say $L$ is unstable. Note this definition of stability is not the same as the stability of the CMC surface itself, since when we talk about the stability of the CMC surface $\Sigma$, we only consider the variational fields which preserve the volume enclosed by $\Sigma$.

For $\Sigma$ be a surface in a three manifold $M$ with positive Ricci curvature, let $u\equiv 1$ we see that
\[\int_\Sigma uLu=\int_\Sigma \vert A\vert^2+\Ric(\mathbf{n,\mathbf{n}})>0.\]
Hence $L$ is always unstable.

Recall a Jacobi field on $\Sigma$ is a variational field $f\mathbf{n}$ such that $Lf=0$. The following lemma show that a positive Jacobi field implies the stability of $L$.

\begin{lemma}\label{lemma-positive Jacobi field}
Suppose there is a positive function $u$ on $\Sigma$ such that $Lu=0$. Then $L$ is stable.
\end{lemma}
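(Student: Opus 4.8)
The plan is to use the classical argument that a positive solution of a Schr\"odinger-type equation yields a logarithmic-cutoff test function showing stability. First I would fix an arbitrary $u\in C^\infty(\Sigma)$ (or $C^\infty_c$ in the non-closed case) and write $u=\varphi w$, where $w>0$ is the given Jacobi field and $\varphi=u/w$. Then I would expand the Rayleigh-type quotient $\int_\Sigma uLu$ by substituting $u=\varphi w$, integrating by parts, and using $Lw=0$ to cancel the zeroth-order term. The key identity is the standard one: $\int_\Sigma \varphi w\,L(\varphi w)=-\int_\Sigma w^2\vert\nabla_\Sigma\varphi\vert^2$, obtained by noting $L(\varphi w)=w\Delta_\Sigma\varphi+2\nabla_\Sigma\varphi\cdot\nabla_\Sigma w+\varphi\,Lw=w\Delta_\Sigma\varphi+2\nabla_\Sigma\varphi\cdot\nabla_\Sigma w$ and then computing
\[
\int_\Sigma \varphi w\big(w\Delta_\Sigma\varphi+2\nabla_\Sigma\varphi\cdot\nabla_\Sigma w\big)
=\int_\Sigma \varphi w^2\Delta_\Sigma\varphi+2\int_\Sigma \varphi w\,\nabla_\Sigma\varphi\cdot\nabla_\Sigma w.
\]
Integrating the first term by parts gives $-\int_\Sigma \nabla_\Sigma(\varphi w^2)\cdot\nabla_\Sigma\varphi=-\int_\Sigma w^2\vert\nabla_\Sigma\varphi\vert^2-2\int_\Sigma \varphi w\,\nabla_\Sigma\varphi\cdot\nabla_\Sigma w$, and the cross terms cancel, leaving $\int_\Sigma uLu=-\int_\Sigma w^2\vert\nabla_\Sigma\varphi\vert^2\le 0$, which is exactly stability.

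The only subtlety is justifying the integration by parts. Since $\Sigma$ is closed (being a compact CMC surface in a compact $M$; in the excerpt's applications $\Sigma$ has no boundary), there are no boundary terms, and since $w>0$ is smooth and bounded away from $0$ by compactness, $\varphi=u/w$ is smooth, so all integrals converge and the manipulations are valid. If one wants the statement for $\Sigma$ with boundary or noncompact, one replaces $u$ by a compactly supported test function and the same computation goes through with vanishing boundary terms; I would note this but not belabor it.

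I expect no serious obstacle here: this is the Fischer-Colbrie--Schoen / Moss--Piepenbrink type criterion, and the whole proof is essentially the one-line substitution $u=\varphi w$ followed by integration by parts. The one place to be careful is the algebra of the cross terms in the integration by parts, to make sure the coefficient $2$ matches so that they cancel exactly; I would write out that cancellation explicitly in the proof. I would also remark after the proof that, combined with the observation just above (that $L$ is always unstable on a surface in a manifold of positive Ricci curvature, via $u\equiv 1$), this lemma immediately rules out positive Jacobi fields on such $\Sigma$ — which is the form in which the lemma will be used in the compactness argument of Section~\ref{section:Compactness Theorem}.
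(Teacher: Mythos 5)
Your proof is correct, but it takes a different computational route from the paper. The paper uses the logarithmic substitution: it sets $w=\log u$ (where $u$ is the positive Jacobi field), derives $\Delta_\Sigma w=-\vert A\vert^2-\Ric(\mathbf{n},\mathbf{n})-\vert\nabla_\Sigma w\vert^2$, multiplies by $v^2$ for an arbitrary test function $v$, and closes the estimate with Young's inequality $2v\langle\nabla_\Sigma v,\nabla_\Sigma w\rangle\leq v^2\vert\nabla_\Sigma w\vert^2+\vert\nabla_\Sigma v\vert^2$. You instead perform the ground-state substitution, writing the test function as $\varphi w$ with $w$ the positive Jacobi field, and obtain the exact identity $\int_\Sigma \varphi w\,L(\varphi w)=-\int_\Sigma w^2\vert\nabla_\Sigma\varphi\vert^2$ with no inequality until the final sign observation. (Note your opening sentence advertises a ``logarithmic-cutoff'' argument --- that is what the paper does --- but what you actually carry out is the substitution, so the description and the proof don't quite match; also your labels are swapped relative to the lemma, where $u$ denotes the Jacobi field.) Both are standard forms of the Fischer-Colbrie--Schoen criterion and both are valid here since $\Sigma$ is closed and $u>0$ is bounded away from zero; your version has the small advantage of yielding the quadratic form exactly rather than just its sign, while the paper's version avoids dividing by the Jacobi field and generalizes more readily to cutoff arguments on noncompact surfaces. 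Your cross-term cancellation is correct as written, so there is no gap.
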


\begin{proof}
Let $w=\log u$. Then $\Delta_\Sigma w=-\vert A\vert^2-\Ric(\mathbf{n},\mathbf{n})-\vert\nabla_\Sigma w\vert^2$.

Let $v$ be any smooth function on $\Sigma$. Multiplying the above identity by $v^2$ to get
\begin{equation}
\begin{split}
\int_\Sigma v^2(\vert A\vert^2+\Ric(\mathbf{n},\mathbf{n}))+\int_\Sigma \vert\nabla_\Sigma w\vert^2v^2&=-\int_\Sigma v^2\Delta_\Sigma w =2\int_\Sigma v\langle \nabla_\Sigma v,\nabla_\Sigma w\rangle\\
&\leq2\int_\Sigma\vert v\vert\vert\nabla_\Sigma w\vert\vert\nabla_\Sigma v\vert \leq \int_{\Sigma}v^2\vert\nabla_\Sigma w\vert^2+\int_\Sigma\vert\nabla_\Sigma v\vert^2.
\end{split}
\end{equation}
Then integration by parts gives
\begin{equation}
\int_\Sigma vLv\leq 0.
\end{equation}
\end{proof}
\section{Compactness Theorem}\label{section:Compactness Theorem}
In this section, we will prove the main compactness theorem.

\subsection{Smooth Limit}
We first show there is a reasonable smooth limit under the conditions in Theorem \ref{T:Main theorem}. 
\begin{theorem}\label{T:Choi-Schoen Convergence}
Let $M$ be a $3$ dimensional compact manifold with positive Ricci curvature and no boundary. Suppose $\Sigma_i\subset M$ is a sequence of closed embedded CMC surfaces with constant mean curvature $H_i$, satisfies the following conditions:
\begin{enumerate}
\item $\vert H_i\vert\leq H_0$ for some constant $H_0$,
\item The genus of $\Sigma_i$ are uniformly bounded,
\item The area of $\Sigma_i$ are uniformly bounded.
\end{enumerate}
Then there is a self-touching smoothly immersed CMC surface $\Sigma$ such that a subsequence of $\Sigma_i$ converges to $\Sigma$ in $C^k$ topology for any $k\geq2$ besides a finite singular set $\mc S$.
\end{theorem}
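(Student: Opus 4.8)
The plan is to run the Choi--Schoen argument for the curvature estimate (Theorem \ref{theorem-Choi-Schoen}) together with a point-picking/concentration-compactness procedure to extract the finite singular set, and then to invoke Arzel\`a--Ascoli together with elliptic regularity for the CMC equation to pass to a smooth limit away from that set. First I would fix a small geodesic ball scale $r_0 < \eps_0$ adapted to the geometry of $M$ (so that Theorem \ref{theorem-Choi-Schoen} applies in every ball $B_{r_0}(p)$, $p \in M$). For each $i$ define the (Radon) measures $\mu_i = |A_{\Sigma_i}|^2 \, d\mathcal{H}^2 \restriction \Sigma_i$. By the Gauss equation, $|A_{\Sigma_i}|^2 = H_i^2 - 2K_{\Sigma_i} + (\text{ambient curvature terms})$; integrating and using Gauss--Bonnet, the genus bound, the area bound, and $|H_i| \le H_0$ gives a uniform bound $\mu_i(M) = \int_{\Sigma_i} |A_{\Sigma_i}|^2 \le \Lambda$ for a constant $\Lambda = \Lambda(M, V, G, H_0)$. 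Passing to a subsequence, $\mu_i \rightharpoonup \mu$ weakly as Radon measures with $\mu(M) \le \Lambda$. Let
\[
\mathcal{S} = \Big\{ p \in M : \mu(\{p\}) \ge \eps_0 \Big\},
\]
where $\eps_0$ is the threshold from Theorem \ref{theorem-Choi-Schoen}; since $\mu$ has finite total mass, $\mathcal{S}$ is finite, say $\#\mathcal{S} \le \Lambda/\eps_0$.

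Next I would establish uniform local curvature bounds on $M \setminus \mathcal{S}$. Fix $p \notin \mathcal{S}$. Then $\mu(\{p\}) < \eps_0$, so one can choose $\rho = \rho(p) < r_0$ small enough that $\mu(\overline{B_{2\rho}(p)}) < \eps_0$; by upper semicontinuity of $\mu \mapsto \mu(\text{compact set})$ under weak convergence, $\limsup_i \mu_i(B_{2\rho}(p)) < \eps_0$, hence $\int_{\Sigma_i \cap B_{2\rho}(p)} |A_{\Sigma_i}|^2 \le \eps_0$ for all large $i$. Applying Theorem \ref{theorem-Choi-Schoen} on $B_{2\rho}(p)$ (shrinking $\rho$ further if necessary so that $2\rho \le \eps_0$) yields $\sup_{B_{\rho}(p) \cap \Sigma_i} |A_{\Sigma_i}|^2 \le C(H_0, B_{2\rho}(p)) \rho^{-2}$, a bound uniform in $i$. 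Covering the compact set $M \setminus \bigcup_{q \in \mathcal{S}} B_{\rho(q)/2}(q)$ by finitely many such balls gives: for every compact $K \subset M \setminus \mathcal{S}$ there is $C_K$ with $\sup_{\Sigma_i \cap K} |A_{\Sigma_i}| \le C_K$ for all large $i$.

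With uniform second fundamental form bounds in hand, standard arguments give local uniform graphical representations: near any $x \in \Sigma_i \cap K$, $\Sigma_i$ is, on a ball of definite size, a union of graphs of functions over the tangent plane with bounded $C^{1,1}$ norm; since each $\Sigma_i$ is a CMC surface with $|H_i| \le H_0$, the CMC (quasilinear elliptic) equation plus Schauder/elliptic $L^p$ estimates bootstrap these to uniform $C^{k,\alpha}$ bounds on slightly smaller balls for every $k$. Passing to a further diagonal subsequence over an exhaustion of $M \setminus \mathcal{S}$ by compact sets, the graph functions converge in $C^k_{\mathrm{loc}}$, producing a smoothly immersed limit $\Sigma \subset M \setminus \mathcal{S}$ solving the CMC equation with mean curvature $H = \lim H_i$ (a subsequential limit of $H_i$), to which $\Sigma_i \to \Sigma$ in $C^k_{\mathrm{loc}}(M \setminus \mathcal{S})$. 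Finally, that $\Sigma$ is self-touching rather than genuinely transverally self-intersecting is forced by the maximum principle: at any non-embedded point $p$, two local sheets of $\Sigma$ are CMC graphs $D_1, D_2$ with the same constant mean curvature that touch; comparing $D_1$ written as a graph $\varphi$ over $D_2$, Theorem \ref{difference} shows $\varphi$ satisfies a linear second-order elliptic equation $Lu = \mathrm{div}(a\nabla\varphi) + b\cdot\nabla\varphi + c\varphi$ with no zeroth-order sign obstruction at the touching point, so the strong maximum principle prevents $\varphi$ from changing sign — each sheet approaches from one side, i.e. (after relabeling) $\varphi \ge 0$. Area bounds pass to the limit by lower semicontinuity, and genus bounds control the number of sheets so the immersion is proper with finite multiplicity.

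The main obstacle I expect is not the curvature estimate itself — that is Theorem \ref{theorem-Choi-Schoen} — but carefully justifying the passage from uniform $|A|$-bounds to uniform higher-order graphical estimates when several sheets of $\Sigma_i$ accumulate: one must argue that after passing to the limit the sheets do not cross (only touch), and that the number of sheets over any small disk stays bounded (otherwise the limit could fail to be a reasonable immersed surface with locally finite area). The genus and area bounds are exactly what prevents an unbounded number of sheets, but making this quantitative — controlling how many sheets can collapse together away from $\mathcal{S}$ — is the delicate point, and is where the embeddedness of the $\Sigma_i$ together with the one-sided touching conclusion from Theorem \ref{difference} and the maximum principle must be used.
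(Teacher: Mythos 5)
Your overall route is the same as the paper's: a uniform bound on $\int_{\Sigma_i}|A|^2$ from Gauss--Bonnet together with the genus, area and mean curvature bounds; a finite set $\mathcal S$ where the total curvature concentrates above the threshold $\eps_0$ of Theorem \ref{theorem-Choi-Schoen}; uniform pointwise curvature bounds and elliptic bootstrapping away from $\mathcal S$; and a diagonal subsequence converging in $C^k_{\mathrm{loc}}(M\setminus\mathcal S)$. The only structural difference is bookkeeping: you extract $\mathcal S$ as the atoms of the weak-$*$ limit of the measures $|A|^2\,d\mathcal H^2$, whereas the paper uses dyadic coverings at scales $r_m=2^{-m}\eps_0$ and a diagonal argument over $m$; these are equivalent, and your version has the small merit of making the total curvature bound explicit.

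One step is justified incorrectly. You argue that the limit is self-touching because the strong maximum principle ``prevents $\varphi$ from changing sign,'' where $\varphi$ is the graph of one local sheet over another. That is not what the maximum principle says: a solution of a general second-order linear elliptic equation with a zeroth-order term of indefinite sign can certainly change sign (eigenfunctions do). The correct and much simpler reason is embeddedness of the approximating surfaces: away from $\mathcal S$ the sheets of $\Sigma_i$ over a fixed disk of $\Sigma$ are pairwise disjoint graphs, hence totally ordered by height, and this ordering survives the $C^2$ limit, giving $\varphi\geq 0$ directly. This is exactly what the paper uses. (The maximum principle and Theorem \ref{difference} enter later, in Theorem \ref{limit} and in the multiplicity-one argument, to upgrade $\varphi\geq 0$ with an interior zero to $\varphi\equiv 0$, i.e.\ to show the touching sheets are genuinely two disks meeting at isolated points.) Separately, the bound on the number of sheets over a disk comes from the area bound combined with the monotonicity formula (Lemma \ref{lemma-CMC monotonicity} with $f\equiv 1$ gives each sheet a definite amount of area in a ball of fixed radius), not from the genus bound as you suggest.
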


\begin{proof}
We follow \cite{choi1985space} and \cite{colding2012smooth}. For each positive integer $m$, take a finite covering $\{B_{r_m}(y_j)\}$ of $M$ such that each point of $M$ is covered at most $h$ times by balls in this covering, and $\{B_{r_m/2}(y_j)\}$ is still a covering of $M$. Here we set $r_m=2^{-m}\eps_0$ and $h$ only depends on $M$. Then we have
\[\sum_{j}\int_{\Sigma_i\cap B_{r_m}(y_j)}\vert A\vert^2\leq hC\]
Therefore for each $i$ at most $hC/\eps_0$ number of balls on which
\[\int_{\Sigma_i\cap B_{r_m}(y_j)}\vert A\vert^2\geq\eps_0\]
By passing to a subsequence of $\Sigma_i$ we can always assume that all the $\Sigma_i$ has the same balls with total curvature $\geq\eps_0$. Call the center of these balls to be $\{x_{1,m},\cdots,x_{l,m}\}$, where $l$ is an integer at most $hC/\eps_0$. Then on the balls other than $B_{x_{k,m}(r_m)}$, by Theorem \ref{theorem-Choi-Schoen} we have uniformly point-wise curvature bound for $\Sigma_i$. Passing to subsequence we may assume that $\Sigma_i$'s converges smoothly on a half of those balls to $\Sigma$. Since $\Sigma_i$ are embedded, so the limit $\Sigma$ is self-touching in the balls other than $B_{x_{k,m}(r_m)}$.

Then we can continue this process as $m$ increase. Finally by a diagonal argument we can get a subsequence $\{\Sigma_i\}$, converges smoothly everywhere to $\Sigma$ besides those points $x_1,\cdots,x_l$ which is the limit of those $\{x_{1,m}\},\cdots,\{x_{l,m}\}$. Moreover, since there is no maximum principle for $\lambda$-surfaces, the limit is only immersed. However if we consider the compactness for each connected components in any fixed ball, we can see the limit is self-touching away from $x_1,\cdots,x_l$.
\end{proof}

Next we will show that $\Sigma$ is actually smooth everywhere. We will follow White to prove that the singularities are removable. The main ingredient is a more delicate curvature estimate near the singularities.

\begin{lemma}\label{curvaturenearsingularity}
Suppose $\Sigma$ is a properly self-touching CMC surface in $B_R\setminus\{x_0\}$ with mean curvature $\vert H\vert\leq H_0$, then there exists $\eps=\eps(H_0,R,x_0)>0$ such that if $\int_{\Sigma}\vert A\vert^2\leq\eps$, then there is $C$ such that
\begin{equation}
\vert A(x)\vert\vert \dist(x,x_0)\vert\leq C.
\end{equation}
\end{lemma}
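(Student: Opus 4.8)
The plan is to follow White's removable singularity argument for the curvature estimate near an isolated singular point, adapting it to the self-touching CMC setting where the curvature estimate of Theorem \ref{theorem-Choi-Schoen} already gives scale-invariant control away from $x_0$. The statement to be proved is precisely that the estimate $\sigma^2\sup_{B_{r-\sigma}}|A|^2\leq C$ can be strengthened, near an \emph{isolated} singularity $x_0$, to the sharper bound $|A(x)|\dist(x,x_0)\leq C$, i.e.\ the curvature blows up no faster than the reciprocal of the distance to $x_0$ (the same rate one gets from Theorem \ref{theorem-Choi-Schoen} applied on balls of radius comparable to $\dist(x,x_0)$, but now uniformly up to $x_0$).

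First I would argue by contradiction and blow-up: suppose no such $C$ exists, so there is a sequence of points $x_j\to x_0$ with $|A(x_j)|\,d_j\to\infty$, where $d_j=\dist(x_j,x_0)$. By a point-selection (maximum-of-the-scale-invariant-quantity) argument in the punctured ball, which is exactly the mechanism used inside the proof of Theorem \ref{theorem-Choi-Schoen}, I may replace $x_j$ by points where $|A|$ is, up to a bounded factor, maximal on the ball $B_{d_j/2}(x_j)$; call the rescaled maximum curvature $\lambda_j=|A(x_j)|\to\infty$ with $\lambda_j d_j\to\infty$. Rescale the metric by $\lambda_j^2$ (so the ambient manifold converges to flat $\mathbb R^3$, and by Lemma \ref{lemma-CMC Simons inequality} the $\delta$-term in Simons' inequality rescales away) and translate $x_j$ to the origin. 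Since $\int_\Sigma |A|^2\leq\eps$ is conformally invariant and small, and the genus/area are controlled, a subsequence of the rescaled surfaces converges, smoothly on compact subsets of $\mathbb R^3$ by Theorem \ref{theorem-Choi-Schoen} and the monotonicity Lemma \ref{lemma-CMC monotonicity}, to a complete properly embedded (possibly self-touching) surface $\Sigma_\infty$ in $\mathbb R^3$ with $|A_{\Sigma_\infty}|(0)=1$, $|A_{\Sigma_\infty}|\leq 1$ (after normalization), and $\int_{\Sigma_\infty}|A|^2\leq\eps$. Because the rescaled mean curvatures $H/\lambda_j\to 0$, the limit $\Sigma_\infty$ is minimal; because $\lambda_j d_j\to\infty$, the puncture escapes to infinity, so $\Sigma_\infty$ has no boundary and no removed point. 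A complete minimal surface in $\mathbb R^3$ with $\int|A|^2$ arbitrarily small (take $\eps$ below the value of $\int|A|^2$ over any catenoid/plane, in fact below the quantum $4\pi$ that forces nonflatness, or more directly apply the gap theorem / Choi--Schoen-type argument one more time) must be flat, contradicting $|A_{\Sigma_\infty}|(0)=1$. The self-touching feature causes no trouble here: the limit is self-touching, but each connected sheet passing through the origin is a genuine complete minimal surface, and the gap/flatness conclusion applies sheet by sheet.

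There is one point needing care: the blow-up limit could a priori have several sheets crossing near the origin with multiplicity, which is why I state things sheet-by-sheet and why I want the contradiction to come from a local gap theorem (flatness from small total curvature on a complete surface) rather than a global area argument. Concretely, after passing to the component of $\Sigma_\infty$ realizing $|A|(0)=1$, I apply Theorem \ref{theorem-Choi-Schoen} on that component with $H_0=0$ on balls $B_\rho(0)\subset\mathbb R^3$ for all $\rho$: the hypothesis $\int|A|^2\leq\eps_0$ holds and $r\leq\eps_0$ fails, so I instead use the standard consequence that a complete minimal surface with $\int|A|^2<\infty$ and total curvature below the first gap is a plane — which I am free to quote, or reprove via the monotonicity formula, since on a minimal surface $|A|^2$ is (sub)harmonic up to the curvature term and the monotonicity Lemma \ref{lemma-CMC monotonicity} with $f=|A|^2$, $\lambda$ absorbed, forces $|A|^2(0)\leq \frac{C}{\pi\rho^2}\int_{B_\rho}|A|^2$ for all $\rho$, hence $|A|(0)=0$ on letting $\rho\to\infty$.

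The main obstacle I expect is \textbf{extracting a clean limiting surface through the blow-up despite self-touching}: one must ensure that the smooth convergence away from the finitely many high-curvature points (guaranteed by Theorem \ref{theorem-Choi-Schoen}) plus the uniform local area bound (from the monotonicity Lemma \ref{lemma-CMC monotonicity}, via the CMC-adapted inequality \eqref{equation-Hessian comparison}) genuinely produce a \emph{proper}, well-defined immersed limit in $\mathbb R^3$ on which the curvature is bounded and the total curvature small, rather than something degenerating. This is handled exactly as in the proof of Theorem \ref{T:Choi-Schoen Convergence}: pass to a subsequence so the bad balls stabilize, note there are no bad balls in the blow-up because $\lambda_j$ was chosen so that $|A|\leq 1+o(1)$ on the unit ball around the base point and a point-selection controls it on every fixed ball, and then the limit is smooth everywhere. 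The remaining bookkeeping — that $d_j\lambda_j\to\infty$ pushes the puncture to infinity, and that $H_i/\lambda_j\to 0$ — is routine.
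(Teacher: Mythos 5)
Your proposal is correct and follows essentially the same route as the paper: a contradiction argument with point selection maximizing the scale-invariant quantity, blow-up by the curvature at the selected points, passage to a complete minimal limit in $\mathbb{R}^3$ with total curvature at most $\eps$ (using that the rescaled mean curvature tends to zero and that $|A|\,d\to\infty$ pushes the puncture to infinity), and the small-total-curvature gap forcing the limit to be a plane, contradicting $|A|(0)=1$. Your extra care about the self-touching sheets and the justification of flatness via the monotonicity formula only make explicit steps the paper leaves implicit.
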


\begin{proof}
We show by contradiction. If the criterion is not true, then we can find a sequence of points $x_n\in((B_{R}(x_0)\setminus B_{1/n}(x_0))\cap\Sigma)$ such that
\[\vert A(x_n)\vert^2(\dist(x,x_0)-\frac{1}{n})\to+\infty.\]
 Otherwise we will have uniform bound for $\vert A(x)\vert^2(\dist(x,x_0)-\frac{1}{n})$ for a sequence of $n\to\infty$, then passing to limit we will have a uniform bound for $\vert A(x)\vert^2\dist(x,x_0)$.

Then we can choose $z_n\in ((B_{R}(x_0)\setminus B_{1/n}(x_0))\cap\Sigma)$ such that $\vert A(z_n)\vert^2(\dist(z_n,x_0)-\frac{1}{n})$ achieve maximum. Note that $\vert A(x)\vert^2(\dist(x,x_0)-\frac{1}{n})$ achieve $0$ on $\partial B_{1/n}(x_0)\cap\Sigma$, so $d_n:=\dist(z_n,x_0)-\frac{1}{n}>0$.

Now we rescale $B_{d_n/2}(z_n)$ with $\vert A(z_n)\vert$, and denote $\{x\in\Sigma:\dist(x,z_n)\leq d_n/2\}$ after rescaling by $\tilde\Sigma_n$, and use tilde to denote the quantities on this new surface. Moreover, since $\vert A(z_n)\vert \to\infty$, the limit of the rescaling of $B_{d_n/2}(z_n)$ would converges to $\mb R^3$, so we can assume $n$ large such that $\tilde\Sigma_n$ actually live in $\mb R^3$ with slightly perturbation of standard Euclidean metric.

Note $\tilde\Sigma_n$ satisfy the following properties. First, $\vert \tilde A(0)\vert=1$; Second, by 
\[\vert A(z_n)\vert^2d_n\to+\infty\]
we know that for any fixed $R>0$, $\tilde\Sigma_n\cap \partial B_{R}(0)\not=\emptyset$ in $\mb R^3$ if $n$ large enough, and $\partial\tilde\Sigma_n\cap B_R(0)=\emptyset$ if $n$ large enough; Finally, for any $x'=\vert A(z)\vert x\in\tilde\Sigma_n$, we have

\[\vert A(x)\vert(\dist(x-x_0)\dist-\frac{1}{n})\leq \vert A(z)\vert d_n.\]

Since $\dist(x,z)\leq d_n/2$, we have $\dist x,x_0)-\frac{1}{n}\geq d_n/2$, thus $\vert A(x)\vert\leq 2\vert A(z)\vert$, thus $\vert \tilde A(x')\vert\leq 2$.
 
By the uniform curvature bound for $\tilde\Sigma_n$, for each $R>0$, there exists a subsequence (still denoted by $\tilde\Sigma_n$) converging smoothly on $B_R(0)$ to a complete surface $\tilde\Sigma$. Checking the equation of rescaling, we see that the limit $\tilde\Sigma$ must be a minimal surface, i.e. $\tilde H=0$.

Since the rescaling would not change the integral of the squared curvature, we have

\[\int_{B_R(0)\cap\tilde\Sigma}\vert A\vert^2\leq\eps.\]

Thus $\tilde\Sigma$ has to be the plane. Which contradicts to the condition that $\vert \tilde A(0)\vert=1$.

\end{proof}

\begin{theorem}\label{limit}
The limit surface in Theorem \ref{T:Choi-Schoen Convergence} is smoothly immersed. Moreover, for $y\in \mc S$ be a non-embedded point, in a small neighbourhood of $y$, $\Sigma$ is a union of two disks which are touching at $y$.
\end{theorem}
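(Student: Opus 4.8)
The plan is to show that the finite singular set $\mc S$ from Theorem \ref{T:Choi-Schoen Convergence} is removable, i.e. that the convergence is in fact smooth across each $y\in\mc S$, and then to analyze the local picture at a non-embedded point. The key analytic input is the refined curvature estimate of Lemma \ref{curvaturenearsingularity}: near each singular point $y$ the second fundamental form of the limit satisfies $\vert A(x)\vert\,\dist(x,y)\le C$. This is exactly the hypothesis needed to run White's removable-singularity argument. The proof will have two parts: (i) remove the singularities, and (ii) identify the two-disk structure at non-embedded points.

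For part (i), I would first fix $y\in\mc S$ and work in a small geodesic ball $B_R(y)$ in which $\Sigma\setminus\{y\}$ is a properly (self-touching) immersed CMC surface with small total curvature. Restricting to a single connected sheet $\Sigma'$ of $\Sigma\cap(B_R(y)\setminus\{y\})$ near $y$ (using that away from $\mc S$ the convergence is smooth and each component is a graph), the bound $\vert A\vert\le C/\dist(\cdot,y)$ together with integrability of $\vert A\vert^2$ and the area bound forces, by the usual tangent-cone/blow-up analysis in an annulus $B_{2\rho}(y)\setminus B_\rho(y)$ and sending $\rho\to 0$, that the blow-ups converge to a union of planes through the origin, each with multiplicity; since $\Sigma$ is embedded along the sequence and self-touching in the limit with small total curvature, the tangent object is a single plane (with some multiplicity $\mu\ge 1$) or two planes touching along the origin. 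Monotonicity of $\rho\mapsto e^{c\rho}\rho^{-2}\area(\Sigma'\cap B_\rho(y))$ (a consequence of the Laplacian comparison computation in the proof of Lemma \ref{lemma-CMC monotonicity}) gives that the density at $y$ is finite and equals that integer multiplicity, and comparing the density slightly away from $y$ (where convergence is smooth) shows the density is in fact $1$ on each sheet. A density-one point with bounded curvature and small total curvature is a smooth point by Allard's regularity theorem \cite{allard1972first}; hence each sheet extends smoothly across $y$, so $\Sigma$ is smoothly immersed and the convergence is $C^k$ across $\mc S$ as well (Arzel\`a--Ascoli on the now-uniformly-bounded $C^{1,\alpha}$ graphs, then bootstrapping by Schauder on the CMC equation).

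For part (ii), once $\Sigma$ is known to be smoothly immersed near $y$, the local picture is just that of finitely many smooth CMC sheets through $y$. The number of sheets is bounded by the density, which I will have shown to be at most $2$: if three or more sheets passed through $y$, two of them would have the same unit normal at $y$, and by the self-touching property one would lie on one side of the other; applying Theorem \ref{difference} to these two sheets (same $H$, so $H_2-H_1=0$) gives that the difference $\varphi\ge 0$ satisfies an elliptic equation $L\varphi=\dv(a\nabla\varphi)+b\cdot\nabla\varphi+c\varphi$ with $a,b,c\to 0$, and the strong maximum principle forces $\varphi\equiv 0$, contradicting that these are distinct sheets of an embedded $\Sigma_i$ unless they coincide — ruling out more than two genuinely distinct sheets. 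Hence near a non-embedded $y$, $\Sigma\cap B_r(y)$ is exactly two smooth disks, and the same maximum-principle argument applied to the two disks shows the higher one is a nonnegative graph over the lower, i.e. they touch at $y$ in the self-touching sense; this is precisely the claimed conclusion.

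The main obstacle I anticipate is part (i): carefully justifying the blow-up/tangent-cone analysis in the self-touching, only-immersed setting and pinning down that the density on each sheet is exactly one (not a higher integer) before invoking Allard. Unlike the embedded minimal case of Choi--Schoen, here the limit can a priori have sheets stacking up with integer multiplicity, so one must use the embeddedness of the $\Sigma_i$ along the approximating sequence — not just of the limit — to exclude higher multiplicity on a single sheet, and keep track of the CMC error terms (the $\delta$ term in Lemma \ref{lemma-CMC Simons inequality} and the $\alpha r$ term in the monotonicity formula), which vanish in the blow-up since they scale subcritically. Handling the interaction between distinct sheets that merge at $y$ — i.e.\ that the singular set is really the neck-pinching locus — is where the bulk of the technical care will go.
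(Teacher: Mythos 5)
Your overall strategy is sound and part (ii) matches the paper's (brief) argument in spirit: the paper also concludes with a maximum-principle step, and your pigeonhole on orientations plus Theorem \ref{difference} with $H_2-H_1=0$ is the right way to rule out three or more sheets. Where you genuinely diverge from the paper is in part (i), the removable singularity. The paper does \emph{not} go through Allard here: after the blow-up shows each component of $\Sigma$ near $y$ is a punctured disk that is graphical over a plane, it uses Theorem \ref{difference} and the implicit function theorem to build a foliation $\{\varphi_{i,t}\}$ of a cylinder over that plane by solutions of the graph equation, and then applies the maximum principle against the leaves to show that \emph{every} sequence of dilations of the sheet converges to the same plane. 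Uniqueness of the tangent plane gives a $C^1$ graph across the puncture, and Schauder theory for the (now uniformly elliptic) graph equation upgrades this to $C^{2,\alpha}$ and then smoothness. Your route replaces the foliation/unique-tangent-plane step by a density computation plus Allard regularity. Both can be made to work; the foliation argument buys you uniqueness of the tangent plane directly from the PDE and never requires treating $\Sigma$ as a varifold, while the Allard route is more standard GMT but needs additional bookkeeping, as follows.

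Two points in your part (i) need care. First, Allard's theorem requires an \emph{upper} bound $\rho^{-2}\area(\Sigma'\cap B_\rho(y))\leq \pi(1+\delta)$ at all small scales together with control of the first variation (including checking that the isolated point $y$ carries no singular first variation, which needs a cutoff argument using the area bound). Lemma \ref{lemma-CMC monotonicity} as stated is a mean-value inequality for subsolutions, not an area-ratio monotonicity, so the almost-monotonicity of $e^{c\rho}\rho^{-2}\area(\Sigma'\cap B_\rho(y))$ that you invoke has to be derived separately (it does follow from the same Laplacian comparison computation applied to $f\equiv 1$ with the inequality run in the other direction, but it is not a citation of the stated lemma). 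Second, your parenthetical claim that the convergence of the $\Sigma_i$ is $C^k$ across $\mc S$ does not follow at this stage and is not part of Theorem \ref{limit}: the $\Sigma_i$ have no uniform curvature bound near $y\in\mc S$ (that is why $y$ is singular), smooth convergence across density-one points requires the multiplicity-one theorem proved afterwards, and at neck-pinching points it fails outright. Neither issue affects the conclusion of the theorem itself, but the first is a real step you must supply and the second should be deleted.
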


\begin{proof}
We only need to prove $\Sigma$ is smooth around singular set $\mc S$. Suppose $y\in\mc S$ is a singularity. We may assume $r$ small enough such that $\int_{B_r(y)\cap\Sigma}\vert A\vert^2\leq\eps$ (Note $\Sigma$ has finite total curvature since $\Sigma_i$'s have uniform finite total curvature). 

By Lemma \ref{curvaturenearsingularity}, there is a constant $C$ such that
\[\vert A(x)\vert\dist(x,y)\leq C.\]
For any $x\in B_r(y)\cap\Sigma$. Now we choose a sequence $r_i\to 0$ and rescale $B_r(y)$ and $\Sigma_i$ by $1/r_i$ and denote it by $\tilde\Sigma_i$. Note the curvature bound
\[\vert A(x)\vert\dist(x,y)\leq C\]
is invariant under rescaling, so this uniform curvature bound indicate that $\tilde\Sigma_i$ smoothly converges to a complete surface $\tilde\Sigma$ in $\mb R^3\setminus\{0\}$. See \cite{white1987curvature}.

Now for $K$ be any compact subset of $\mb R^3\setminus\{0\}$, 
\[\int_{\tilde\Sigma_i\cap K}\vert A\vert^2=\int_{\Sigma_i\cap r_iK}\vert A\vert^2\to0\text{ as $r_i\to0$}.\]
This implies $\tilde\Sigma$ is a union of planes. Thus $\Sigma\cap B_r(0)$ is actually a union of disks and punctured disks.

Now let $\Sigma$ denote one of its connected components which is a punctured disk. Since $\tilde\Sigma_i$ converges to to the plane in $\mb R^3\setminus\{0\}$, we can assume for some $i$, $\tilde\Sigma_i$ can be written as a graph $\varphi_i$ of that plane. Without lost of generality, let the plane be the $xy$ plane in $\mb R^3$. By the computations in the appendix, in $B_1$, $\tilde\Sigma_i$ satisfies a elliptic equation over the tangent plane:
\begin{equation}
L\varphi-(H_2-H_1)=\dv(a\nabla\varphi)+b\cdot\nabla \varphi+c\varphi.
\end{equation}
Here every terms are defined on $\mb R^2\cap B_1(0)$. Again, when $i$ large, each terms on the right hand side goes to $0$. Then by implicit function theorem, if we fixed the normal direction to point upwards, we can solve $\varphi_{i,t}$ for boundary date $\varphi_{i,t}=\varphi_i+t$ on $\partial B_{1}(0)$. Then the graphs of $\varphi_{i,t}$ foliate a region of $B_1(0)\times\mb R^2$. Since we fixed the direction of normal vectors, we can apply maximal principle, which indicates that the leaf such that $\varphi_{i,t}(0)=0$ lies on one side of $\tilde\Sigma_i$. As a result, any sequence of dilations of $\Sigma$ must converge to the same limiting plane, which is just the tangent plane of that leaf at $0$.

Thus $\Sigma\cup\{0\}$ is a $C^1$ graph of a function $v$ in a neighbourhood of $0$. Since $v$ is a $C^{2,\alpha}$ solution to an elliptic equation except $0$, then $v$ is actually $C^{2,\alpha}$. Hence $\Sigma\cup\{0\}$ is a smooth disk.

We have already shown that $\Sigma\cup\{0\}$ is a union of smooth disks. So $\Sigma$ is an immersed surface, with locally finite many curvature concentration points. By maximal principle, at each touching point $\Sigma$ consists of two disks which are touching at that point. So $\Sigma$ is smoothly self-touching immersed.
\end{proof}

\subsection{Smoothly Convergence}
In this subsection we first assume that the limit is not minimal, and discuss the situation that the limit is minimal in the end. 

We will show the convergence is smooth besides neck pinching points. Note we have already shown the convergence is smooth besides $\mc S$, so we only need to show smooth convergence across points in $\mc S$ with density $1$ (i.e. locally $\Sigma$ is one disk) and points in $\mc S$ which is not neck pinching. 

We first define neck pinching points. From Theorem \ref{limit} we know that for any points $y\in \mc S$ with density more than one, locally $\Sigma$ is the union of two disks $D_1$, $D_2$, and $\Sigma_i$ can be written as graphs $G_i^1,G_i^2$ of functions $\varphi_i^1$, $\varphi_i^2$ over $D_1\setminus\{y\}$ and $D_2\setminus\{y\}$ respectively. 
\begin{definition}\label{Definition:neck pinching point}
We say $y$ is a {\bf neck pinching} point if there is $r_0>0$ such that for $0<r<r_0$, $G_i^1$ and $G_i^2$ do not lie in the same connected components of $\Sigma_i\cap B_r$ for at most finitely many $\Sigma_i$'s.
\end{definition}

Now we can prove the convergence is smooth besides these neck pinching points. The main ingredient is to show the convergence has multiplicity one. Then by a theorem by Allard \cite{allard1972first}, also see \cite{choi1985space} and \cite{colding2012smooth}, we can show the convergence is smooth across those singularities with density $1$. Finally we show that even for a singularity with density greater than $1$, if it is not a neck pinching point we can still argue that the convergence is smooth across it.

\begin{theorem}\label{T:multiplicity1}
The multiplicity of the convergence in Theorem \ref{T:Choi-Schoen Convergence} is one when the limit is not minimal. 
\end{theorem}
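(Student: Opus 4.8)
The plan is to argue by contradiction following White \cite{white1987curvature} and Colding--Minicozzi \cite{colding2012smooth}: suppose the convergence $\Sigma_i\to\Sigma$ has multiplicity $m\geq 2$ at some point, with $\Sigma$ not minimal. Then near a generic point of $\Sigma$ (away from the finite set $\mc S$) the surface $\Sigma_i$ decomposes into $m$ nearby sheets, each a graph over $\Sigma$. The key observation stressed in the introduction is that, even though $\Sigma_i$ and $\Sigma$ need not have the same mean curvature, the \emph{linearized} operator is the same operator $L$ from Section 3; moreover since $\Sigma$ is not minimal its unit normal $\mf n$ is consistently defined and all $m$ sheets inherit the \emph{same} orientation (this is where non-minimality is used — for a minimal limit two sheets could approach from opposite sides, which is exactly the multiplicity-$2$ exception). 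So consider the two outermost (or two consecutive) sheets, written as graphs $G_i^1 = \Graph(u_i^1)$ and $G_i^2 = \Graph(u_i^2)$ over a fixed piece of $\Sigma$, with $u_i^1 \geq u_i^2$. By Theorem \ref{difference} applied with the roles of the two CMC surfaces $\Sigma_i^1,\Sigma_i^2$ (same constant mean curvature $H_i$, so the $H_2-H_1$ term vanishes), the difference $w_i := u_i^1 - u_i^2 \geq 0$ satisfies a linear second-order elliptic equation whose leading part is $L$ plus error terms $\dv(a_i\nabla\cdot) + b_i\cdot\nabla\cdot + c_i(\cdot)$ with $a_i,b_i,c_i\to 0$ in $C^0$ as the sheets flatten onto $\Sigma$.

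Next I would normalize: set $\tilde w_i = w_i / w_i(p_i)$ where $p_i$ realizes the supremum of $w_i$ over a slightly shrunk domain (or normalize by an $L^\infty$ or integral norm on a fixed compact subset). The curvature estimate Theorem \ref{theorem-Choi-Schoen} together with the smooth convergence of $\Sigma_i$ away from $\mc S$ gives uniform $C^{2,\alpha}_{loc}$ control, so by Schauder estimates applied to the elliptic equation for $\tilde w_i$ (coefficients converging, right-hand side going to zero) a subsequence of $\tilde w_i$ converges in $C^2_{loc}$ on $\Sigma\setminus\mc S$ to a limit function $w_\infty \geq 0$ with $Lw_\infty = 0$ and $w_\infty \not\equiv 0$ (the normalization survives the limit, at least after checking the sup is attained at an interior point staying away from $\mc S$, via a Harnack inequality). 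By the strong maximum principle $w_\infty > 0$ on $\Sigma\setminus\mc S$. Then I would remove the singular set: using the removable-singularity argument from Lemma \ref{curvaturenearsingularity} and Theorem \ref{limit} (the points of $\mc S$ are genuinely removable for the geometry, and $w_\infty$ is a bounded positive solution of a nice elliptic equation near each of the finitely many punctures), $w_\infty$ extends to a positive solution of $L w_\infty = 0$ on all of the connected closed surface $\Sigma$ — i.e. a positive Jacobi field.

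Finally, Lemma \ref{lemma-positive Jacobi field} says a positive function in the kernel of $L$ forces $L$ to be stable, i.e. $\int_\Sigma uLu\leq 0$ for all $u$. But taking $u\equiv 1$ gives $\int_\Sigma uLu = \int_\Sigma (\vert A\vert^2 + \Ric(\mf n,\mf n)) > 0$ because $M$ has positive Ricci curvature — a contradiction. Hence $m=1$.

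I expect the main obstacle to be the normalization/limit step — specifically ensuring that after dividing by $w_i(p_i)$ the limit $w_\infty$ is genuinely nonzero and that the maximum points $p_i$ do not escape to $\mc S$ or to the boundary of the working domain. This requires a Harnack-type inequality for the (non-divergence or divergence form) operators with the small-coefficient perturbations, uniform in $i$, which is available since the coefficients converge to those of $L$ and the domains are interior; care is needed to patch local graph decompositions into a global one over $\Sigma\setminus\mc S$ (the sheets must be globally ordered, which uses embeddedness of $\Sigma_i$ and the self-touching structure), and to verify the error terms in Theorem \ref{difference} indeed tend to zero in the relevant norm using the smooth convergence $\Sigma_i\to\Sigma$ off $\mc S$. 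The removable-singularity step for $w_\infty$ is comparatively routine given the machinery already developed in Lemma \ref{curvaturenearsingularity} and Theorem \ref{limit}.
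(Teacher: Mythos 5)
Your proposal is correct and follows essentially the same route as the paper: decompose $\Sigma_i$ into ordered graphical sheets over $\Sigma\setminus\mc S$ (using that the non-minimal limit fixes a consistent orientation, so the $H_2-H_1$ term in Theorem \ref{difference} drops out), show the normalized difference of the extreme sheets converges to a nonnegative Jacobi field, remove the singularities at $\mc S$, and contradict Lemma \ref{lemma-positive Jacobi field} via the test function $u\equiv 1$ and positive Ricci curvature. The one point you flag as delicate—where to normalize—is resolved in the paper exactly as you suggest in your parenthetical alternative: normalize at a fixed point $p\notin\mc S$ and use the uniform Harnack inequality on compact subsets of $\Sigma\setminus\mc S$ to get $C^\alpha$ and then $C^{2,\alpha}$ bounds, so the limit satisfies $u(p)=1$ and cannot vanish.
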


We follow the idea in \cite{colding2012smooth}. The key ingredient is to show that if the convergence has multiplicity greater than $1$, then there exists a positive Jacobi field on $\Sigma$, which is a contradiction.

\begin{proof}
We argue as in \cite{choi1985space} that we only need to consider the case that $M$ is simply connected, and self-touching $\Sigma$ is two sided (note although in \cite{choi1985space} this argument concerns closed embedded surfaces, it can be adapted to self-touching surfaces). So if the convergence has multiplicity more than $1$, then $\Sigma_i$'s can be decomposed into several sheets of graphs on $\Sigma\setminus \mc S$. Since $\Sigma$ is two-sided, we can label the graphs by height, and let the highest sheet of $\Sigma_i$ can be written as the graph of $w_i^+$, and the lowest sheet of $\Sigma_i$ can be written as the graph of $w_i^-$, and let $w_i=w_i^+-w_i^-$. Fix a point $p$ not in $\mc S$, and let $u(x)=w(x)/w(p)$. Then $u(p)=1$ and $u>0$ on $\Sigma\setminus\mc S$. Moreover, although $w_i^-$ and $w_i^+$ do not satisfy a linear elliptic equation, but their difference does, hence $u_i$ satisfies a linear elliptic equation. Then Harnack inequality implies $C^\alpha$ bound for $u_i$'s and then standard elliptic theory gives $C^{2,\alpha}$ bound. Then by Arezela-Ascoli theorem, a subsequence (still denoted by $u_i$) converges uniformly in $C^2$ on compact subset of $\Sigma\setminus\mc S$ to a non-negative function $u$ on $\Sigma\setminus\mc S$ such that
\begin{equation}
Lu=0,u(p)=1.
\end{equation}
Next we show $u$ can extends smoothly across $\mc S$ to a solution of $Lu=0$. Again we follow the idea in \cite{white1987curvature} and \cite{colding2012smooth}. We only need to show $u$ is bounded around each singularity $y$, then by standard elliptic theory $u$ extends smoothly. Suppose $u_i$ satisfies the linearized equation
\[L(u_i)=\dv(a_i\cdot\nabla u_i)+b_i\cdot \nabla u_i+c_iu_i.\]
Then choose an exponential normal coordinates over $B_\eps(y)\subset\Sigma$ and a cylinder $N$ over $B_\eps(y)\cap \Sigma$, when $\eps$ is small, implicit function theorem gives a foliation of of graphs $v_t$ over $B_\eps(y)\cap\Sigma$ in $N$ so that
\[v_0(x)=0\mbox{ for all $x\in B_\eps(y)$, and $v_t(x)=t$ for all $x\in\partial B_\eps(y)$}.\]
By Harnack inequality, $t/C_i\leq v_t\leq C_it$ for some $C_i>0$. Since the right hand side of the linearized equation turns to $0$ as $i\to\infty$, $C_i$ actually has uniform bound. Then by maximum principle, $u_i$ is bounded on $B_\eps(y)$ by a multiple of its supremum on $B_\eps(y)\setminus B_{\eps/2}(y)$. Hence $u$ has a removable of singularity at $p$.

So there exists a non-negative solution $u$ of the linearize operator $Lu=0$. By $u(p)=1$, Harnack inequality implies $u$ is positive everywhere. Then by Lemma \ref{lemma-positive Jacobi field}, $\Sigma$ is stable. However, plugging in test function constant $1$ implies that no immersed CMC surface in positive Ricci three manifold can be stable, which is a contradiction. Then we conclude the convergence has multiplicity one.
\end{proof}

By \cite{allard1972first}, this theorem implies the smooth convergence across those density $1$ points. It remains to show the smooth convergence across those touching singularities which are not neck pinching singularities.

\begin{theorem}
The convergence is smooth besides those neck pinching singularities.
\end{theorem}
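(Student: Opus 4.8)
By Theorem \ref{T:Choi-Schoen Convergence} the convergence is already smooth on $M\setminus\mc S$, and by Theorem \ref{T:multiplicity1} (the limit is not minimal) together with Allard's regularity theorem \cite{allard1972first} it is smooth across every point of $\mc S$ of density one; so it remains to treat a point $y\in\mc S$ of density two that is not a neck pinching point. By Theorem \ref{limit}, in a small ball $B_R(y)$ the limit $\Sigma$ is a union of two disks $D_1,D_2$ meeting tangentially at $y$, each of constant mean curvature $H_\infty=\lim_iH_i$, and $\Sigma_i$ is the union of graphs $G_i^1,G_i^2$ of functions $\varphi_i^1,\varphi_i^2$ over $D_1\setminus\{y\}$ and $D_2\setminus\{y\}$, tending to $0$ in $C^k_{loc}$ away from $y$. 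Since $y$ is not a neck pinching point, Definition \ref{Definition:neck pinching point} fails, so we may fix some $r<R$ and pass to a subsequence for which $G_i^1$ and $G_i^2$ lie in distinct connected components $\Sigma_i^1\supset G_i^1$, $\Sigma_i^2\supset G_i^2$ of $\Sigma_i\cap B_r$.

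The plan is to prove that $\Sigma_i^1$ and $\Sigma_i^2$ converge smoothly to $D_1$ and $D_2$ near $y$; combined with the convergence on $M\setminus\mc S$ and across the density one points this proves the theorem. The first step is to pin down the area ratios of the two sheets near $y$. For generic $0<\rho<r$ and $\tau>0$, the smooth convergence of $G_i^j$ to $D_j$ on the annulus $B_\rho(y)\setminus B_\tau(y)$ gives $\liminf_i\area(\Sigma_i^j\cap B_\rho(y))\ge\area(D_j\cap(B_\rho(y)\setminus B_\tau(y)))$, and letting $\tau\to0$ yields $\liminf_i\area(\Sigma_i^j\cap B_\rho(y))\ge\area(D_j\cap B_\rho(y))=\pi\rho^2(1+O(\rho^2))$. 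On the other hand, by multiplicity one (Theorem \ref{T:multiplicity1}) we have $\area(\Sigma_i\cap B_\rho(y))\to\area((D_1\cup D_2)\cap B_\rho(y))=2\pi\rho^2(1+O(\rho^2))$. Since $\Sigma_i^1$ and $\Sigma_i^2$ are disjoint subsets of $\Sigma_i\cap B_r$, these two facts squeeze
\begin{equation}
\area(\Sigma_i^j\cap B_\rho(y))\ \longrightarrow\ \pi\rho^2(1+O(\rho^2)),\qquad j=1,2,
\end{equation}
and force every remaining component of $\Sigma_i\cap B_r$ to have area tending to $0$ inside $B_\rho(y)$, so by the area lower bound for CMC surfaces those components avoid $B_{\rho/2}(y)$ for $i$ large. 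Hence, near $y$, each $\Sigma_i^j$ is an integral varifold with $|H|\le H_0$ whose density ratio at $y$ tends to $1$.

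Now I would apply Allard's theorem \cite{allard1972first}: choosing $\rho_0$ small enough that $H_0\rho_0$ and the excess of the density ratio over $1$ lie below Allard's thresholds, each $\Sigma_i^j\cap B_{\rho_0}(y)$ is, for $i$ large, a $C^{1,\alpha}$ graph of small norm over a plane, which (matching with the graph $G_i^j$ over $D_j$ outside $B_{\rho_0}(y)$) is a $C^{1,\alpha}$ graph of small norm over $D_j$. The graph function then satisfies the quasilinear CMC equation over $D_j$ of Theorem \ref{difference}, with $H_2-H_1=H_i-H_\infty\to0$ and all lower order terms tending to $0$; Schauder estimates bootstrap the $C^{1,\alpha}$ bound to uniform $C^{k,\alpha}$ bounds, and hence to $C^k$ convergence of $\Sigma_i^j$ to $D_j$ on $B_{\rho_0/2}(y)$. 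Thus $\Sigma_i\cap B_{\rho_0/2}(y)=\Sigma_i^1\cup\Sigma_i^2$ converges smoothly to $D_1\cup D_2$. Doing this at each of the finitely many density two non neck pinching points, and combining with Theorems \ref{T:Choi-Schoen Convergence} and \ref{T:multiplicity1}, we obtain smooth convergence away from the neck pinching points.

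The step I expect to be the main obstacle is the area accounting in the second paragraph: one has to rule out that a single sheet $\Sigma_i^j$ develops extra area --- a curvature concentrating bubble or a shrinking neck --- over a neighbourhood of $y$, so that Allard's theorem applies to it with density exactly one. This is exactly where both hypotheses enter: multiplicity one (Theorem \ref{T:multiplicity1}, available because the limit is not minimal) caps the total area of $\Sigma_i$ near $y$, while the failure of the neck pinching condition is what allows $\Sigma_i\cap B_r$ to be split into the two density one pieces $\Sigma_i^1$ and $\Sigma_i^2$ in the first place. Were $y$ a neck pinching point, the unique component of $\Sigma_i\cap B_r$ through $y$ would carry density two at $y$ and Allard would not apply, consistent with the genuine loss of smoothness there.
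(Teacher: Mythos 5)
Your proof is correct and follows the same route as the paper: use the failure of the neck-pinching condition to split $\Sigma_i$ near $y$ into the two graphical sheets over $D_1$ and $D_2$, and then run the multiplicity-one/Allard/elliptic-bootstrap analysis on each sheet separately. The paper's own proof is only a three-line sketch ("apply the previous analysis to each graph $G^j_i$"); your area-accounting step verifying that each sheet has density ratio close to one at $y$ is exactly the detail that sketch leaves implicit, so you have supplied a fuller version of the same argument rather than a different one.
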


\begin{proof}
Let $y\in\mc S$ be a singularity with density greater than $1$, then by Theorem \ref{limit} locally $\Sigma$ is the union of two disks $D_1,D_2$. Then by definition of pinching points, we know that if $y$ is not a pinching point, locally $\Sigma_i=G^1_i\cup G^2_i$ be the union of two graphs over $D_1\setminus\{y\},D_2\setminus\{y\}$ respectively. Thus we only need to apply previous analysis to each graph $G^j_i$, and will get smoothly convergence across $y$.
\end{proof}

Finally we discuss the situation that the limit $\Sigma$ is an embedded minimal surface. Now multiplicity $2$ convergence may happen because the CMC surfaces can converge to $\Sigma$ from both side with different orientation. However, if the convergence if of multiplicity larger than $2$, then there are at least two graphs have the same orientation. Then we repeat the discussion to these graphs, will again get a positive Jacobi field, which is a contradiction. Therefore, the convergence at most has multiplicity $2$.

\vspace{5pt}
Combining all the ingredients in this section we conclude the main theorem Theorem \ref{T:Main theorem}.

\section{Touching Examples}\label{section:Touching}
In this section we give some examples of touching points of CMC surfaces in three manifolds. 

\begin{example}
{\bf Kissing itself:} Let us consider a sphere $\mb S_R$ with radius $R$ in $\mb R^3$. by quotient a $\mb Z^3$ action of $\mb R^3$, we get a torus $\mb T^3$, and the image of $\mb S_R$ in $\mb T^3$ is an embedded CMC surface when $R$ sufficiently small. Now we increase the radius of $\mb S_R$. Then for some specific $R_0$, in $\mb T^3$, $\mb S_{R_0}$ will kiss itself thus form a touching point. This is not a neck pinching point.

The touching set may be very large. For example, we can consider cylinder $\mc C_R$ with radius $R$ in $\mb R^3$. Using the same construction, we can see for some radius $R_0$, $\mc C_{R_0}$ kiss itself at a straight line, which is a $1$ dimensional curve.
\end{example}

\begin{example}
{\bf Unduloid neck pinching:} An unduloid is a one periodic CMC surface in $\mb R^3$. See \cite{hadzhilazova2007unduloids} for detailed description of unduloids.

The unduloid has two parameter $a,c$ to determine its shape, see \cite{hadzhilazova2007unduloids} Theorem 3.1. When we let $a\to 0,c\to 1/H$, we can see the family of unduloids will smoothly converge to the union of spheres besides the touchings of spheres. This is an example of neck pinching singularity. One can see that smooth convergence can not across these neck pinching points because the topology changes in the limit.

Of course, we can quotient $\mb R^3$ by some $\mb Z^3$ actions to make this example to be an example in a closed three manifold.
\end{example}

The reader may notice that these examples are not lie in a Ricci positive three manifold. It is not known that whether the touching behavior of CMC surfaces in positive Ricci three manifolds is simpler or not. So we suggest the following conjectures:

\begin{conjecture}
A self-touching CMC surface in positive Ricci three manifold can not carry infinitely many touching points.
\end{conjecture}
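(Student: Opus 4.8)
The plan is to reduce the conjecture, via a blow-up at an accumulation point of touching points, to a rigidity statement for CMC surfaces that touch themselves along an arc, and then to rule out that configuration using positive Ricci curvature.

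\textbf{Step 1 (reduction to an accumulation point).} Since $M$ is compact and $\Sigma$ is smoothly immersed (so $|A|$ is locally bounded), infinitely many touching points $p_k$ subconverge to some $q\in\Sigma$. If $q$ were an embedded point of $\Sigma$, a neighborhood of $q$ would contain no touching points, a contradiction; hence $q$ is itself a touching point, and near $q$ we have $\Sigma=D_1\cup D_2$ with $D_1=\Graph(\varphi)$ over $D_2$, $\varphi\geq0$. At a genuine touching point the two sheets must carry opposite normal orientations, since otherwise the strong maximum principle (Hopf lemma) for the quasilinear CMC graph equation would force $D_1=D_2$; writing $D_1,D_2$ as graphs $\psi_1,\psi_2$ over their common tangent plane with a fixed upward unit normal, $\psi_1$ and $\psi_2$ then solve the mean curvature graph equation with opposite constants $\pm|H|$ (the case $H=0$ is excluded, minimal surfaces having no touching points). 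By Theorem~\ref{difference}, $\varphi=\psi_1-\psi_2\geq0$ solves a uniformly elliptic equation $\tilde L\varphi=2|H|$, where $\tilde L$ is a small perturbation of the linearized operator $L$; the local touching set is exactly $\{\varphi=0\}$, it has empty interior (as $\tilde L\varphi\not\equiv0$), and the $p_k$ are zeros of $\varphi$ accumulating at the zero $q$.

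\textbf{Step 2 (blow-up of the gap function).} Put $r_k=\dist(p_k,q)\to0$ and rescale $M$ near $q$ by $r_k^{-1}$; the rescaled sheets have bounded second fundamental form and mean curvature $r_k|H|\to0$, so both converge smoothly to a common plane, and $\varphi$ rescales to $\varphi_k\to0$ in $C^2$ on compact sets. Normalizing, set $\psi_k=\varphi_k/\sup_{B_1}\varphi_k$, so $\psi_k\geq0$, $\sup_{B_1}\psi_k=1$, $\psi_k(0)=0$, and $\psi_k(\tilde p_k)=0$ with $|\tilde p_k|\to1$; here $\psi_k$ solves $\tilde L_k\psi_k=2r_k^2|H|/m_k$ with $m_k=\sup_{B_{r_k}(q)}\varphi$ and $\tilde L_k\to\Delta$. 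If $m_k/r_k^2\to\infty$ the right-hand side tends to $0$ and $\psi_k$ converges to a nonnegative harmonic function on $\mathbb R^2$ with an interior zero, hence to $0$, contradicting $\sup_{B_1}\psi_k=1$; a mean value estimate from $\tilde L\varphi=2|H|$, $\varphi(q)=0$ gives the matching bound $m_k\gtrsim r_k^2$, so $m_k\asymp r_k^2$. Passing to a limit, $\psi_\infty\geq0$ on $\mathbb R^2$ solves $\Delta\psi_\infty=\ell$ for some $\ell\in(0,\infty)$, with $\psi_\infty(0)=\psi_\infty(\tilde p_\infty)=0$, $|\tilde p_\infty|=1$, $\sup_{B_1}\psi_\infty=1$. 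Classifying nonnegative solutions of $\Delta\psi_\infty=\ell>0$ on $\mathbb R^2$ with two distinct zeros (necessarily global minima) shows $\psi_\infty$ equals, up to a rigid motion, $\tfrac{\ell}{2}x^2$; so $\psi_\infty$ vanishes along a line. Feeding this back, and using that CMC surfaces in an analytic metric are real-analytic (so $\{\varphi=0\}$ is a real-analytic set of dimension $\leq1$), the touching set of $\Sigma$ near $q$ is an analytic arc $\gamma$ along which the two sheets are tangent with quadratic transversal profile. Thus the conjecture reduces to: \emph{a self-touching CMC surface in a three-manifold of positive Ricci curvature cannot touch itself along an arc.}

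\textbf{Step 3 (the arc case: the main obstacle).} Along such an arc $\gamma$ the two sheets are tangent with mean curvature vectors pointing away from each other; with respect to a common unit normal their second fundamental forms satisfy $\operatorname{tr}A_1=H$, $\operatorname{tr}A_2=-H$, and $A_1-A_2=-\Hess\varphi|_\gamma\geq0$ with $(A_1-A_2)(\gamma',\gamma')=0$. The approach I would try is to produce a \emph{positive} solution of $Lu=0$---built from $\varphi$ and its $\gamma$-directional derivatives, either on one of the sheets or on a complete CMC surface obtained by continuing a sheet along $\gamma$---and then to apply Lemma~\ref{lemma-positive Jacobi field} to conclude that the sheet is $L$-stable, which is impossible for an immersed CMC surface in positive Ricci curvature (test function $u\equiv1$). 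An alternative is a second, purely transversal blow-up along $\gamma$, reducing the problem to an ODE for the profile and showing that the profile forced in Step~2 is inconsistent with $\Ric>0$ via the Gauss and Codazzi equations of the two sheets. I expect this step to be the crux: the flat kissing examples of Section~\ref{section:Touching} (a cylinder, or a family of spheres, touching along a curve) show that the statement fails when $\Ric\geq0$ only, so positive Ricci curvature must be used essentially and non-perturbatively, and it is not yet clear which of these routes can be pushed through.
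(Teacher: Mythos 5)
This statement is posed in the paper as an open conjecture; the paper offers no proof, so there is nothing to compare your argument against and it must stand on its own. It does not: by your own admission Step~3 --- ruling out a CMC surface that touches itself along an arc in positive Ricci curvature --- is left unresolved, and that is precisely where the hypothesis $\Ric>0$ must enter (as you correctly note, the flat cylinder in $\mb T^3$ from Section~\ref{section:Touching} touches itself along a line, so no argument that only uses $\Ric\geq0$ can work). Neither of the two routes you sketch there is carried out: it is not explained how $\varphi$ or its $\gamma$-derivatives would produce a \emph{globally defined, positive} solution of $Lu=0$ on a closed sheet (locally near $\gamma$ the transversal Hessian of $\varphi$ is a natural candidate, but extending it to a positive Jacobi field on all of $\Sigma$ is the whole difficulty), and the ``transversal ODE'' alternative is not formulated. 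So the proposal is a program, not a proof.

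There is also a genuine gap earlier, in the reduction at the end of Step~2. The blow-up shows that along the scales $r_k=\dist(p_k,q)$ the normalized gap function converges to $\tfrac{\ell}{2}\langle x,e\rangle^2$, which vanishes on a line; but this constrains only the asymptotics of $\varphi$ along that sequence of scales and does not imply that $\{\varphi=0\}$ contains an arc. A nonnegative solution of $\tilde L\varphi=2|H|$ can in principle have infinitely many isolated zeros accumulating at $q$ without vanishing on any curve. Your appeal to real-analyticity to close this gap assumes the ambient metric is analytic, which is not among the hypotheses (the CMC equation has coefficients built from $g$, so $\varphi$ is only smooth in general); and if one \emph{does} assume analyticity, the \L{}ojasiewicz structure of the zero set of the analytic function $\varphi$ already yields an arc through any non-isolated zero, making the blow-up superfluous. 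The parts that are sound are the localization in Step~1 (opposite orientations of the two sheets, $\tilde L\varphi=2|H|$ with $H\neq0$, zeros of $\varphi$ as the touching set), the two-sided bound $m_k\asymp r_k^2$, and the Liouville-type classification of entire nonnegative solutions of $\Delta\psi=\ell$ (provided you add the Harnack-type growth estimate $\sup_{B_R}\psi_k\leq C(R)$ needed to pass to a limit on all of $\mb R^2$). To turn this into a proof you would need (i) a scale-independent statement forcing the touching set to contain an arc, or a direct argument at accumulating isolated zeros, and (ii) the actual rigidity argument of Step~3.
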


\begin{conjecture}
For a CMC surface in three manifold with $1$ dimensional touching set, the touching set must be a geodesic of the ambient space.
\end{conjecture}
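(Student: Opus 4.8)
\emph{Sketch of a possible proof.} Fix a point $p$ of the touching set $\gamma$. By the definition of self--touching, in a small ball $B_r(p)$ the surface $\Sigma$ is a union of two disks $D_1,D_2$, with $D_1$ the graph over $D_2$ of a function $\phi\ge 0$, and $\gamma\cap B_r(p)=\{\phi=0\}$. The first point is to pin down orientations: since $\Sigma$ is a single two--sided CMC surface, $D_1$ and $D_2$ carry the same constant mean curvature for the orientation of $\Sigma$, and I claim the unit normals of $D_1,D_2$ along $\gamma$ are \emph{opposite}. Indeed, if they agreed then Theorem~\ref{difference} would make $\phi\ge 0$ satisfy a \emph{homogeneous} linear uniformly elliptic equation near $\gamma$ (the coefficients $a,b,c$ being small there since $\phi,\nabla\phi\to 0$ on $\gamma$); applying the strong maximum principle / Hopf lemma to $\phi\ge 0$ with $\phi(p)=0$ would force $\phi\equiv 0$, so the touching set would be $2$--dimensional, contrary to hypothesis. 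With the normals opposite, the mean curvature of $D_1$ measured against the normal $\mathbf n$ of $D_2$ equals $-H$, so Theorem~\ref{difference} gives
\begin{equation}\label{eq:conj-pde}
L\phi+2H=\dv(a\nabla\phi)+b\cdot\nabla\phi+c\phi,\qquad a,b,c\to0\ \text{as}\ \|\phi\|_{C^2}\to0,
\end{equation}
so near $\gamma$ this is a uniformly elliptic linear equation whose right side tends to the \emph{nonzero} constant $-2H$ (note $H\ne 0$: a minimal surface cannot self--touch, by the maximum principle).

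Step 1: the $2$--jet of $\phi$ along $\gamma$. Work in Fermi coordinates $(s,t)$ on $D_2$ adapted to $\gamma$, with $s$ arclength on $\gamma$ and $t$ the signed $D_2$--distance, so $\gamma=\{t=0\}$. Since $\phi\ge 0$ and $\phi|_\gamma=0$ we get $\phi=0,\ \nabla\phi=0$ on $\gamma$; substituting into \eqref{eq:conj-pde} (all lower--order and quadratic terms vanish on $\gamma$) gives $\phi_{ss}|_\gamma=\phi_{st}|_\gamma=0$ and $\phi_{tt}|_\gamma=-2H$, a strictly positive constant along $\gamma$ (after orienting $\Sigma$ so that $H<0$). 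Hence $\gamma$ is a nondegenerate transverse minimum curve of $\phi$ with constant opening rate, so it is a smooth embedded curve, and in the Darboux frame $e_1=\dot\gamma$, $e_2=\partial_t$ the second fundamental forms of the two sheets along $\gamma$ differ by $\operatorname{diag}(0,-2H)$. Writing $\kappa_n=A_{D_2}(\dot\gamma,\dot\gamma)$ for the normal curvature of $\gamma$, $\tau=A_{D_2}(e_1,e_2)$, and $\kappa_g$ for the geodesic curvature of $\gamma$ inside $D_2$, and recalling $\nabla^M_{\dot\gamma}\dot\gamma=\kappa_n\mathbf n+\kappa_g e_2$, the conjecture is \emph{equivalent} to $\kappa_n\equiv 0$ and $\kappa_g\equiv 0$: that $\gamma$ be simultaneously an asymptotic curve and a geodesic of $D_2$ (equivalently of $D_1$).

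Step 2: closing the gap. I see two routes. (a) Differentiate \eqref{eq:conj-pde} repeatedly in the transverse direction $\partial_t$ and restrict to $\gamma$, and expand the Codazzi equations of $D_1$ and $D_2$ (whose second fundamental forms differ by $\operatorname{diag}(0,-2H)$ and whose traces are $H$ and $-H$) in powers of $t$. The first transverse derivative of \eqref{eq:conj-pde} already gives $\phi_{ttt}|_\gamma$ proportional to $H\kappa_g$; continuing, one gets a hierarchy of relations along $\gamma$ among the Taylor coefficients of $\phi$ and the functions $\kappa_n,\tau,\kappa_g$ and their $s$--derivatives. The aim is to show this system, together with the sign constraint $\phi\ge 0$ and the requirement that $\gamma$ close up consistently with the rest of the compact surface $\Sigma$, has only the solution $\kappa_n\equiv\kappa_g\equiv 0$. (b) Rescale $M$ about $p$ keeping the opening rate $-2H$ of order one; by the curvature estimate of Section~2 the pair $(D_1,D_2)$ should subconverge to a pair of CMC surfaces in $\R^3$ tangent along a curve from opposite sides, and granting the Euclidean rigidity statement that such a pair consists of pieces of two coaxial right circular cylinders of equal radius meeting along a straight line, a continuity argument would propagate straightness of the blow--up limit to the conclusion that $\gamma$ is an ambient geodesic.

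The main obstacle — and the reason this is posed only as a conjecture — is that Step~1 alone does \emph{not} constrain $\gamma$: the transverse expansion of \eqref{eq:conj-pde} merely \emph{computes} the higher Taylor coefficients of $\phi$ out of $\kappa_n,\kappa_g$, and $\phi=-Ht^2+O(t^3)\ge 0$ near $\gamma$ regardless of $\gamma$. So a genuinely new ingredient is required: either a hidden integrability condition extracted from the full nonlinear/Codazzi hierarchy along a closed $\gamma$, or the Euclidean classification of pairs of equal--mean--curvature CMC surfaces tangent along a curve, which does not appear to be recorded in the literature. Supplying either of these is where the real work lies.
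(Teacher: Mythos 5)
The statement you are addressing is posed in the paper only as a conjecture; no proof is given there, so there is no argument of the author's to compare against, and the question is simply whether your proposal closes the problem. It does not, and you say so yourself. What you do establish is sound and is a reasonable first reduction: by the one-sided touching condition and the strong maximum principle the two sheets must have opposite normals along the touching curve $\gamma$ (and hence $H\neq 0$, since a minimal surface cannot self-touch); at points of $\gamma$ one has $\phi=0$, $\nabla\phi=0$, the Hessian of $\phi$ equals the difference of the two second fundamental forms, and its trace is $-2H$, so after adapting coordinates the transverse second derivative is the nonzero constant $-2H$ while the tangential ones vanish; and the conjecture is indeed equivalent to the vanishing of both the normal curvature $A(\dot\gamma,\dot\gamma)$ and the geodesic curvature of $\gamma$ in the sheet. (A small circularity — you use Fermi coordinates about $\gamma$ before knowing $\gamma$ is a smooth curve — is repairable via the strict transverse convexity $\phi_{tt}>0$, so I do not count it as the gap.)

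The genuine gap is exactly the one you name: the local jet analysis only \emph{computes} higher Taylor coefficients of $\phi$ in terms of $\kappa_n,\kappa_g,\tau$ and never produces an equation forcing $\kappa_n=\kappa_g=0$; indeed $\phi=-Ht^{2}+O(t^{3})\geq 0$ is consistent with any curve $\gamma$, so no contradiction can come from positivity alone. Neither of your two proposed completions is carried out, and neither is routine. Route (a) is a hope that the Codazzi/transport hierarchy along a closed $\gamma$ closes up into a usable integrability condition; no such condition is exhibited. Route (b) has an internal scaling problem: blowing up the metric about a point of $\gamma$ rescales the mean curvature to zero, so the natural limit is a pair of tangent planes (or minimal surfaces), which carries no information about $\kappa_n,\kappa_g$; if instead you do not rescale, you never reach a complete Euclidean configuration, so the ``Euclidean rigidity'' input (that two equal-$|H|$ CMC pieces tangent along a curve from opposite sides must be coaxial circular cylinders meeting along a line) is both unavailable in the literature and not reachable by the compactness tools of Sections 2--4 of this paper; it may well be as hard as the conjecture itself. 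So the proposal should be read as a useful reformulation of the conjecture (touching curve is an ambient geodesic iff it is simultaneously an asymptotic line and an intrinsic geodesic of either sheet), not as a proof.
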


Another important observation is that a touching point of a self-touching CMC surfaces can be generated by both kissing and neck pinching process. For example, in $\mb T^3$, a sphere kissing itself can be generated by both the first example and the second example. So a very natural question is whether any touching can be generated by both process? Some observation suggest the answer is probably NO:

\begin{example}
Consider two spheres in $\mb R^3$ kissing at a single point $p$. Then they can not be the limit of a sequence of CMC surfaces, hence $p$ can not be a neck pinching point of a sequence of embedded CMC surfaces.
\end{example}

Note in \cite{aleksandrov1962uniqueness} Alexandrov proved that any embedded CMC surface in $\mb R^3$ must be a standard sphere. Hence this proposition is obvious. However, in more general three manifolds it remains unknown. 

\begin{conjecture}
Suppose $M$ be a compact three manifold with positive Ricci curvature. Assume there exists $S_1\cup S_2$ be the union of two embedded CMC spheres in $M$ kissing at $p$. Then $p$ can not be a neck pinching point.
\end{conjecture}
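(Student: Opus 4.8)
The plan is to argue by contradiction, extracting from a hypothetical neck pinch at $p$ a positive supersolution of the Jacobi operator on one of the two spheres away from finitely many points, which is incompatible with positive Ricci curvature by the mechanism of Lemma~\ref{lemma-positive Jacobi field}. Suppose $p$ were a neck pinching point, realized by a sequence $\Sigma_i$ of closed embedded CMC surfaces as in Theorem~\ref{T:Main theorem}. After passing to a subsequence, Theorems~\ref{T:Choi-Schoen Convergence} and \ref{limit} give smooth convergence $\Sigma_i\to\Sigma$ off a finite set $\mathcal S\ni p$, with $\Sigma$ equal near $p$ to $D_1\cup D_2$, $D_j\subset S_j$. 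Since $S_1\cup S_2$ is not minimal we are in alternative (1) of Theorem~\ref{T:Main theorem}; in particular $H_i\to H$, which forces $S_1$ and $S_2$ to share the same mean curvature $H$ once they are given the orientations induced from $\Sigma_i$. Put $\mathcal S'=\mathcal S\cap S_1$; for $i$ large the part of $\Sigma_i$ near $S_1$ is a normal graph $w_i$ over $S_1\setminus\mathcal S'$ with $w_i\to0$ in $C^\infty_{loc}$, and by Theorem~\ref{difference},
\begin{equation}
L_{S_1}w_i-(H_i-H)=\dv(a_i\nabla w_i)+b_i\cdot\nabla w_i+c_iw_i,\qquad a_i,b_i,c_i\to0\ \text{in}\ C^\infty_{loc}(S_1\setminus\mathcal S').
\end{equation}

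Next I would fix a normalizing scale $\tau_i\to0$ adapted to the neck (the neck radius, or $\tau_i=\|w_i\|_{L^\infty(K)}$ for a well-chosen compact $K\subset S_1\setminus\mathcal S'$) and set $\hat w_i=w_i/\tau_i$, $\beta_i=(H_i-H)/\tau_i$. Passing to a further subsequence, $\beta_i\to\beta$ and, by Harnack and Schauder estimates, $\hat w_i\to u\not\equiv0$ in $C^\infty_{loc}(S_1\setminus\mathcal S')$ with $L_{S_1}u=\beta$ on $S_1\setminus\mathcal S'$. The heart of the matter is to show, using the kissing hypothesis, that $u>0$ on $S_1\setminus\mathcal S'$ and that $\beta\le0$. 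For this one analyses the neck: rescaling $\Sigma_i$ about $p$ by the neck size produces in the limit a complete embedded minimal surface in $\R^3$ of finite total curvature with two sheets coming together, hence a catenoid; the hypothesis that near $p$ the limit is a disk $D_1$ lying \emph{above} $D_2$ (so that the two balls bounded by $S_1,S_2$ meet only at $p$) fixes the direction in which the $S_1$–sheet plunges into the neck, hence the sign of the logarithmic residue of $u$ at $p$, and matching this catenoidal inner solution to the outer solution $u$ is meant to force $u>0$ together with $\beta\le0$.

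Granting these two facts, $u>0$ solves $L_{S_1}u\le0$ on $S_1\setminus\mathcal S'$, i.e.\ $u$ is a positive supersolution. Running the proof of Lemma~\ref{lemma-positive Jacobi field} with $w=\log u$ and test functions supported in $S_1\setminus\mathcal S'$ yields $\int_{S_1}v\,L_{S_1}v\le0$ for all $v\in C_c^\infty(S_1\setminus\mathcal S')$; since a finite set has zero capacity in dimension two this extends to all $v$, so $L_{S_1}$ is stable on $S_1$. But $v\equiv1$ gives $\int_{S_1}1\cdot L_{S_1}1=\int_{S_1}\big(|A|^2+\Ric(\bn,\bn)\big)>0$ because $\Ric>0$, a contradiction. (If the neck analysis instead shows that the singularity of $u$ at $p$ is removable, the contradiction is immediate from Lemma~\ref{lemma-positive Jacobi field} once $\beta\le0$.)

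The step I expect to be the real obstacle is establishing the sign of the mean–curvature defect $\beta$ and the singular behaviour of the limiting Jacobi field through the neck — essentially a flux/residue bookkeeping at $p$. This is precisely where the hypotheses that $S_1,S_2$ are \emph{spheres kissing} at $p$, rather than merely touching, and that $\Ric>0$ have to enter: for the unduloids degenerating to a chain of mutually tangent spheres in the flat torus the same scheme produces no contradiction, so any proof must detect what is special about the present configuration. Making the matched inner/outer (neck/cap) expansion rigorous, in the spirit of gluing constructions for CMC surfaces, appears to be the main technical hurdle.
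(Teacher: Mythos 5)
This statement is posed as an open conjecture in the paper; the author gives no proof, so there is nothing to compare your argument against --- it must stand on its own. As written, it does not. The core of your scheme is to extract from the single sheet of $\Sigma_i$ lying over $S_1$ a renormalized limit $u$ with $L_{S_1}u=\beta$ on $S_1\setminus\mathcal S'$, and then to claim that the kissing geometry forces $u>0$ and $\beta\le 0$. That claim is the entire content of the conjecture, and you do not prove it. Note the contrast with the paper's multiplicity argument (Theorem \ref{T:multiplicity1}): there positivity of the limiting Jacobi field is automatic because one takes the \emph{difference} $w_i^+-w_i^-$ of two ordered sheets over the \emph{same} limit surface. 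Here there is only one sheet over $S_1$, its graph function $w_i$ has no a priori sign, and Harnack cannot even be invoked until nonnegativity is known. There is also no control on $\beta=\lim(H_i-H)/\tau_i$ without building the normalization $\tau_i$ around $|H_i-H|$ as well, in which case the limit equation may be $Lu=\mathrm{const}\neq 0$ and the supersolution argument does not close.

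More seriously, your own sanity check shows the proposed mechanism cannot work in the form stated. For the degenerating unduloids, the limit configuration is a chain of mutually tangent round spheres, the neck blow-up is catenoidal, and $\beta=0$; if ``matching the catenoidal inner solution to the outer solution'' forced $u$ to be one-signed, then the capacity argument (which does go through, since point singularities have zero $W^{1,2}$-capacity in dimension two and $\int_{S}|A|^2+\Ric\ge\int_S|A|^2>0$ even in flat space) would yield a contradiction and unduloids could not exist. Hence the neck matching alone provably does \emph{not} force a sign on $u$: on the round sphere the operator $\Delta+2$ admits sign-changing solutions with logarithmic poles at the punctures, and that is exactly what the unduloid produces. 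So positive Ricci curvature, or some other global feature distinguishing the two-sphere configuration from the unduloid chain, must enter precisely in the step you defer, and you give no mechanism by which it does. Until that step is supplied, the argument is a plausible strategy outline rather than a proof, and the conjecture remains open.
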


\section{Eigenvalue Estimate of CMC Surfaces with small $\vert H\vert$}
In this section we want to discuss an application of our main theorem. We will give a lower bound of first eigenvalue of CMC surfaces in positive Ricci three manifold with small $\vert H\vert$.

The main idea is using a method by Choi and Wang in \cite{choi1983first}. In \cite{choi1983first} Choi and Wang used an identity by Reilly to estimate the first eigenvalue of minimal surface in three manifold. The main issue for generalizing their method to CMC surface is that we may not be able to control the term with mean curvature (in minimal surface case, this term vanishes). So we need more delicate estimate for each terms in Reilly's identity.

We first recall the proof in \cite{choi1983first}. They used a formula by Reilly. For $u$ be a smooth function defined on a bounded domain $\Omega$ we have
\begin{equation}
\begin{split}
\int_\Omega\left(\vert\nabla^2u\vert^2+\Ric(\nabla u,\nabla u)-(\Delta u)^2\right)
=\int_{\partial\Omega}(A((\nabla u)^\top,(\nabla u)^\top)-2u_n\Delta_{\partial\Omega}u+Hu_n^2),
\end{split}
\end{equation}
where $u_n$ is the normal derivative and $H$ is the mean curvature on $\partial\Omega$. Then they apply this formula to $\partial\Omega$ is minimal, where $u$ is the harmonic function solving
\[\Delta_\Omega u=0\mbox{ and }u\vert_{\partial\Omega}=f,\]
where $f$ an eigenfunction of the first eigenvalue on $\partial\Omega$ such that $\int_{\partial\Omega}f^2=1$. Then they could get first eigenvalue estimate for $\partial\Omega$, i.e. the minimal surfaces, in positive Ricci three manifolds which are simply connected. Later in \cite{choi1985space} Choi and Schoen used a covering argument to extend the estimate to all closed three manifold with positive Ricci curvature.

Let us naively follow their method to deal with CMC surfaces. Suppose now $\partial\Omega$ is a CMC surface with constant mean curvature $H$ and the first eigenvalue of $\partial\Omega$ is $\lambda$. We will get the following inequality (see \cite{colding2011course} page 244):
\begin{equation}
2\lambda\int_{\Omega}\vert\nabla u\vert^2\geq (\min\Ric)\int_\Omega\vert\nabla u\vert^2+\int_\Omega\vert\nabla^2 u\vert^2-\int_{\partial \Omega}A((\nabla u)^\top,(\nabla u)^\top)-H\int_{\partial\Omega}u_n^2.
\end{equation}
Since $A$ changes sign if we replace $\Omega$ by its complement, we can always assume $-\int_{\partial \Omega}A((\nabla u)^\top,(\nabla u)^\top)$ is non-negative and get
\begin{equation}\label{E:Reilly for CMC}
2\lambda\int_{\Omega}\vert\nabla u\vert^2\geq (\min\Ric)\int_\Omega\vert\nabla u\vert^2+\int_\Omega\vert\nabla^2 u\vert^2-H\int_{\partial\Omega}u_n^2.
\end{equation}
So our goal is to control $\int_\Omega\vert\nabla^2 u\vert^2-H\int_{\partial\Omega}u_n^2$.
\subsection{Trace Theorem}
In this subsection, we will transform the problem of controlling $\int_\Omega\vert\nabla^2 u\vert^2-H\int_{\partial\Omega}u_n^2$ to the problem of getting uniform tubular neighbourhood of CMC surfaces.  We need a trace theorem in three manifold. The idea of the proof is based on the proof in \cite{evans2010partial}.

\begin{theorem}\label{T:Trace on manifold}
Suppose $\Sigma=\partial\Omega$ is an embedded surface in a three manifold $M$. Suppose there is a constant $\delta$ such that $\exp_x(t\mathbf{n}):\Sigma\times[-\delta,\delta]\to M$ is a diffeomorphism from $\Sigma\times[-\delta,\delta]$ to its image, and there is a constant $A_0$ such that $\vert A\vert\leq A_0$ on $\Sigma$, then there is a constant $C$ only depending on $M,A_0$ and $\delta$ such that
\begin{equation}
\int_{\partial\Omega}(u_n)^2\leq C \int_{\Omega}(\vert\nabla u\vert^2+	\vert\nabla^2u\vert^2).
\end{equation}
\end{theorem}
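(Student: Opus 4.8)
The plan is to prove the inequality first for $u$ smooth up to the boundary (the case actually needed in the Reilly application, and the general case follows by approximation), and to run the classical one‑dimensional trace estimate of \cite{evans2010partial} in Fermi (normal exponential) coordinates along $\Sigma$. By hypothesis $\Phi(x,t)=\exp_x(t\mathbf{n})$ is a diffeomorphism of $\Sigma\times[-\delta,\delta]$ onto a tubular neighbourhood $N_\delta$ of $\Sigma$, and since $\Sigma=\partial\Omega$ separates $N_\delta$ into two halves we may orient $\mathbf{n}$ so that $\Phi(\Sigma\times[-\delta,0])\subset\overline\Omega$. Write $\partial_t=\Phi_*(\partial/\partial t)$ for the coordinate field; along each curve $t\mapsto\Phi(x,t)$ this is the unit tangent of a geodesic, so $\nabla_{\partial_t}\partial_t=0$.

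The key point is the following. Put $g(x,t)=(\partial_t u)(\Phi(x,t))=\langle\nabla u,\partial_t\rangle(\Phi(x,t))$, so that $g(x,0)=u_n(x)$ on $\Sigma$ and $|g(x,t)|\le|\nabla u|$. Differentiating along the geodesic and using $\nabla_{\partial_t}\partial_t=0$ gives $\partial_t g=\nabla^2u(\partial_t,\partial_t)$, hence $|\partial_t g(x,t)|\le|\nabla^2 u|$ (since $|\partial_t|=1$). Fix a cutoff $\eta\in C^\infty([-\delta,0])$ with $\eta(-\delta)=0$ and $\eta(0)=1$. Then for each $x\in\Sigma$,
\begin{equation}
u_n(x)^2=g(x,0)^2=\int_{-\delta}^{0}\partial_t\!\big(\eta(t)\,g(x,t)^2\big)\,dt=\int_{-\delta}^{0}\big(\eta' g^2+2\eta g\,\partial_t g\big)\,dt ,
\end{equation}
and by Young's inequality the integrand is bounded by $C\big(|\nabla u|^2+|\nabla^2 u|^2\big)$ evaluated at $\Phi(x,t)$, with $C$ depending only on $\eta$. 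Integrating over $x\in\Sigma$ yields
\begin{equation}
\int_{\partial\Omega}u_n^2\le C\int_{\Sigma\times[-\delta,0]}\big(|\nabla u|^2+|\nabla^2 u|^2\big)\big(\Phi(x,t)\big)\,dt\,d\mu_\Sigma(x).
\end{equation}

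It remains to replace the product measure $dt\,d\mu_\Sigma$ by the Riemannian volume $d\vol_M$ of $M$. The Jacobian of $\Phi$ relating these measures has the form $J(x,t)=\det\big(I-t\,S_x+O(t^2)\big)$, where $S_x$ is the shape operator of $\Sigma$ and the $O(t^2)$ correction is controlled by the ambient curvature; since $M$ is compact, $|A|\le A_0$, $|t|\le\delta$, and $\Phi$ is a diffeomorphism, $J$ is bounded above and below by positive constants depending only on $M$, $A_0$, $\delta$. Hence the right-hand side above is at most a constant multiple of $\int_{\Omega}(|\nabla u|^2+|\nabla^2 u|^2)\,d\vol_M$, proving the theorem. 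The only slightly delicate points are this volume comparison — keeping the constant explicit in terms of $A_0$ and $\delta$ — and the bookkeeping that $\partial_t^2u$ is genuinely the normal–normal component of $\nabla^2u$ along the Fermi geodesics; everything else is the standard fundamental-theorem-of-calculus estimate. I expect the volume comparison to be the main (though routine) obstacle.
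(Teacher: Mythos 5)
Your proposal is correct and follows essentially the same route as the paper: both run the classical one-dimensional Evans-type trace estimate in the Fermi coordinates $\Phi(x,t)=\exp_x(t\mathbf{n})$, using a cutoff in $t$, the fundamental theorem of calculus along the normal geodesics, and Young's inequality. The only difference is cosmetic — you track $u_n$ directly via $g(x,t)=\langle\nabla u,\partial_t\rangle$ and make the Jacobian comparison between $dt\,d\mu_\Sigma$ and $d\vol_M$ explicit, whereas the paper first reduces to a trace inequality for a general function $f$ via $(u_n)^2\le|\nabla u|^2$ and disposes of the measure comparison by asserting the pulled-back metric is uniformly comparable to the product metric.
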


\begin{proof}
Note $(u_n)^2\leq\vert\nabla u\vert^2$, so we only need to prove a standard trace theorem
\[\int_{\partial\Omega}f^2\leq C \int_{\Omega}(f^2+	\vert\nabla f\vert^2).
\]

By the conditions, we can pull back the metric of $M$ to $\Sigma\times[-\delta,\delta]$, and by the uniform curvature bound, the pull back metric is uniformly closed to the standard production metric. In particular, we only need to prove the trace theorem on $\Sigma\times[-\delta,\delta]$ with product metric. Let us choose a cut-off function $\zeta$ such that $\zeta=1$ on $\Sigma\times[-\delta/2,\delta/2]$ and supported on $\Sigma\times[-\delta,\delta]$. Moreover we may assume its gradient is bounded by $C/\delta$ for some constant $C$. Then 

\begin{equation}
\begin{split}
\int_{\partial\Omega}f^2dx'&=\int_\Sigma f^2\zeta dx'=-\int_{\Sigma\times[-\delta,0]}(f^2\zeta)_{x_n}dx\\
&=-\int_{\Sigma\times[-\delta,0]}\vert f\vert^2\zeta_{x_n}+2f f_{x_n}\zeta dx\\
&\leq C\int_{\Sigma\times[-\delta,0]}\vert f\vert^2+\vert\nabla f\vert^2 dx.
\end{split}
\end{equation}

Here $x_n$ is the normal direction (i.e. the direction on $[-\delta,\delta]$), and $dx'$ is the measure on $\Sigma$, $dx$ is the measure of the production metric. In the last inequality we use Young's inequality. Then translating this back to $M$ we get the desired trace theorem.
\end{proof}

If $H$ is sufficiently close to $0$, then we apply this trace theorem into inequality (\ref{E:Reilly for CMC}) to get the eigenvalue lower bound:
\begin{equation}
\lambda\geq\frac{\min\Ric-HC}{2}
\end{equation}
where $C$ is a constant depending on $M,A_0,\delta$ in trace theorem.
\subsection{Uniform Bound for CMC Surfaces with $H$ close to $0$}
It remains to prove the pointwise curvature bound and the existence of $\delta$ in Theorem \ref{T:Trace on manifold}. We will use the compactness theorem to get these bounds for CMC surfaces with $H$ small.

\begin{theorem}
Suppose there is no embedded minimal surface in $M$ which is the multiplicity $2$ limit of a sequence of CMC surfaces. There exists $H_0>0$ such that for embedded CMC surface $\Sigma$ with mean curvature $\vert H\vert\leq H_0$ area less than $V$ and genus less than $G$, its curvature $\vert A\vert\leq C(H_0,V,G)$
\end{theorem}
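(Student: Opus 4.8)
The plan is to argue by contradiction using the compactness Theorem~\ref{T:Main theorem}. Suppose no such uniform constant $C(H_0,V,G)$ exists for any $H_0>0$. Then we can find a sequence $H_0^{(i)}\to 0$ and embedded CMC surfaces $\Sigma_i$ with mean curvature $|H_i|\leq H_0^{(i)}$, $\area(\Sigma_i)\leq V$, $\genus(\Sigma_i)\leq G$, but $\sup_{\Sigma_i}|A|\to\infty$. In particular $|H_i|\to 0$, so the three conditions of Theorem~\ref{T:Main theorem} are satisfied with the uniform bound $H_0 = \sup_i H_0^{(i)} < \infty$. Applying the theorem, a subsequence converges either to a self-touching smoothly immersed CMC surface $\Sigma$ away from finitely many neck pinching points (alternative (1)), or with multiplicity $2$ to an embedded minimal surface (alternative (2)). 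The hypothesis of the theorem we are proving rules out alternative (2) entirely, so we are in case (1).

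Next I would observe that since $H_i\to 0$, the limit $\Sigma$ is in fact a (self-touching, smoothly immersed) minimal surface: the mean curvature is continuous under the $C^k$ convergence away from the singular set, and equals $\lim H_i = 0$ there; by Theorem~\ref{limit} the limit extends smoothly across the touching singularities, so $\Sigma$ is a smooth minimal immersion globally. The key point is that away from the finitely many neck pinching points, the convergence $\Sigma_i\to\Sigma$ is in $C^k$ for every $k\geq 2$, hence $|A_{\Sigma_i}|$ is uniformly bounded on compact subsets of $\Sigma$ minus those points. So any blow-up of curvature must concentrate at one of the finitely many neck pinching points $x_1,\dots,x_\ell$.

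To rule this out I would invoke the curvature estimate near singularities, Lemma~\ref{curvaturenearsingularity}, exactly as in the proof of Theorem~\ref{limit}: near each singular point $y = x_k$, for $r$ small we have $\int_{B_r(y)\cap\Sigma_i}|A|^2 \leq \eps$ uniformly in $i$ (since the total curvatures of $\Sigma_i$ are uniformly bounded and the singular set is finite, we can choose $r$ small and localize the total curvature), so Lemma~\ref{curvaturenearsingularity} applied to each $\Sigma_i$ gives $|A_{\Sigma_i}(x)|\,\dist(x,y)\leq C$ with $C$ independent of $i$. Rescaling $B_r(y)$ by $1/r_i$ with $r_i\to 0$ as in the proof of Theorem~\ref{limit}, the uniform estimate forces convergence to a union of planes; but then the leaf-foliation / maximum principle argument of Theorem~\ref{limit} shows $\Sigma_i\cup\{y\}$ is a $C^{1}$, hence $C^{2,\alpha}$, graph near $y$ with uniformly bounded second fundamental form, contradicting $\sup_{\Sigma_i}|A|\to\infty$. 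The main obstacle is making sure the neck pinching points genuinely do not obstruct this argument: a priori a neck pinching point is exactly where the topology changes and where two sheets become a single connected component, so one must check that Lemma~\ref{curvaturenearsingularity} still applies there (it does, since that lemma only uses properness of the self-touching CMC surface in a punctured ball and a small total curvature hypothesis, both of which hold here), and that the blow-up limit is still a union of planes rather than, say, a catenoid neck — this is guaranteed by the vanishing of total curvature on compact subsets of the punctured ball. Once the uniform bound $|A_{\Sigma}|\leq C(H_0,V,G)$ is established, the existence of the uniform tubular neighborhood width $\delta$ in Theorem~\ref{T:Trace on manifold} follows from standard comparison geometry (the normal exponential map is a diffeomorphism on a uniform collar once the second fundamental form and the ambient geometry are controlled), completing the chain that yields Theorem~\ref{T:Application eigenvalue}.
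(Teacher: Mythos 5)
Your setup (contradiction, $H_i\to 0$, $\sup_{\Sigma_i}|A|\to\infty$, apply Theorem~\ref{T:Main theorem}, use the hypothesis to exclude the multiplicity-$2$ alternative, note the limit is minimal, and localize the blow-up at the finitely many neck pinching points) matches the paper up to the decisive step, but the way you then handle the neck pinching points contains a genuine gap. The paper's proof finishes differently and much more simply: since the limit $\Sigma$ is \emph{minimal}, the strong maximum principle forbids any self-touching point of the limit; by Theorem~\ref{limit} and Definition~\ref{Definition:neck pinching point}, neck pinching points are touching points (points of $\mc S$ of density at least two), so there are none, the convergence is smooth \emph{everywhere}, and the uniform curvature bound follows immediately. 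You never invoke this, and without it your argument at a putative neck pinching point does not close.

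Concretely, Lemma~\ref{curvaturenearsingularity} applied to each $\Sigma_i$ only yields $|A_{\Sigma_i}(x)|\dist(x,y)\leq C$, which is compatible with $|A_{\Sigma_i}|$ blowing up like $\dist(x,y)^{-1}$ as $x\to y$ and hence gives no uniform bound on $\sup_{\Sigma_i}|A|$ near $y$. The subsequent "leaf-foliation / maximum principle" step of Theorem~\ref{limit} proves that the \emph{limit} punctured disk extends smoothly across $y$; it does not show that the $\Sigma_i$ themselves are graphs with uniformly bounded second fundamental form near $y$ --- indeed at a genuine neck pinching point they are not (the $\Sigma_i$ contain a small neck whose curvature necessarily blows up, exactly as in the unduloid example of Section~\ref{section:Touching}, where the family degenerates to touching spheres with curvature concentrating at the necks). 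So the statement you are trying to prove at a neck pinching point is false in general; the only way out is to show that neck pinching points cannot occur when the limit is minimal, which is the maximum principle step you omitted. The final remark about the tubular neighborhood width $\delta$ is also not part of this theorem; the paper proves it as a separate statement by another compactness argument.
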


\begin{proof}
We argue by contradiction. Suppose such $H_0$ does not exist. Then we can find a family of CMC surfaces $\Sigma_i$, with mean curvature $H_i\to 0$ such that a point $p_i$ on $\Sigma_i$ has curvature $\vert A(p_i)\vert\to\infty$ as $\i \to \infty$. By compactness of $M$ we may assume $p_i\to p$ for a point $p\in M$. Now by main Theorem \ref{T:Main theorem}, $\Sigma_i$ converges to a minimal surface $\Sigma$. Since $\Sigma$ is minimal, by maximum principle there is no touching point. So the convergence is everywhere smooth. However $\vert A(p)\vert$ is finite since $\Sigma$ is an smoothly embedded surface, which is a contradiction. Thus $H_0$ exists. 
\end{proof}

\begin{theorem}
Suppose there is no embedded minimal surface in $M$ which is the multiplicity $2$ limit of a sequence of CMC surfaces. There exists $H_0>0$ and $\delta_0>0$ such that for embedded CMC surface $\Sigma$ with mean curvature $\vert H\vert\leq H_0$ area less than $V$ and genus less than $G$, $\exp_x(t\mathbf{n}):\Sigma\times[-\delta,\delta]\to M$ is a diffeomorphism.
\end{theorem}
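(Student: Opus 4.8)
The plan is to argue by contradiction, in the same spirit as the two preceding proofs, and to reduce the failure of a uniform tubular neighbourhood to the excluded multiplicity $2$ phenomenon. First I would suppose that no such $H_0,\delta_0$ exist; then for every $n$ there is an embedded CMC surface $\Sigma_n$ with mean curvature $H_n$, $|H_n|\le 1/n$, area less than $V$ and genus less than $G$, for which $\exp_x(t\mathbf{n})\colon\Sigma_n\times[-1/n,1/n]\to M$ fails to be a diffeomorphism onto its image. In particular $H_n\to 0$, and by the previous theorem we already have a uniform bound $|A|\le C(H_0,V,G)$ along the $\Sigma_n$ for $n$ large. By Theorem \ref{T:Main theorem} a subsequence converges to a limit surface $\Sigma$; since $H_n\to 0$ the limit has zero mean curvature on each sheet, so $\Sigma$ is minimal, and a self-touching minimal surface is forced to be embedded by the Hopf maximum principle applied to the homogeneous linear elliptic equation of Theorem \ref{difference} with $H_1=H_2=0$ (exactly as in the proof of Theorem \ref{limit}). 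Hence $\Sigma$ is an embedded minimal surface.

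Next I would invoke the standing hypothesis: by the discussion of the minimal case following Theorem \ref{T:multiplicity1} the multiplicity of the convergence $\Sigma_n\to\Sigma$ is at most $2$, and since $\Sigma$ is not the multiplicity $2$ limit of a sequence of CMC surfaces, the multiplicity must be $1$. Multiplicity one convergence to a smooth embedded surface is smooth by Allard's regularity theorem \cite{allard1972first} (as used after Theorem \ref{T:multiplicity1}), so in fact $\Sigma_n\to\Sigma$ in $C^k$ for every $k\ge 2$, with no residual singular set. I would then derive the contradiction from the fact that the fixed compact embedded smooth surface $\Sigma$ has a tubular neighbourhood of some radius $2\delta_*>0$, i.e. $\exp^{\perp}_{\Sigma}\colon\Sigma\times(-2\delta_*,2\delta_*)\to M$ is a diffeomorphism onto an open set containing $\Sigma$: since $\Sigma_n$ and its unit normal field converge in $C^1$ (indeed $C^2$) to those of $\Sigma$, a standard perturbation argument shows that $\exp_x(t\mathbf{n})\colon\Sigma_n\times[-\delta_*/2,\delta_*/2]\to M$ is still a diffeomorphism onto its image for all large $n$ --- its differential stays invertible because of a uniform lower bound on the focal distance coming from the curvature estimate of the previous theorem together with the ambient geometry, and it stays injective because of the injectivity of $\exp^{\perp}_{\Sigma}$ and the $C^1$-closeness of $\Sigma_n$ and its normal field to $\Sigma$. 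Restricting to $\Sigma_n\times[-1/n,1/n]$ once $1/n<\delta_*/2$ contradicts the choice of $\Sigma_n$, so the desired $H_0$ and $\delta_0$ exist.

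The step I expect to be the main obstacle is the last one: making precise that a uniform two-sided curvature bound together with uniform ambient curvature and injectivity radius gives a uniform lower bound on the focal distance (a Riccati/Jacobi-field comparison), and that $C^2$-closeness to an embedded surface carrying a fixed tubular neighbourhood yields a uniform lower bound on the radius of injectivity of the normal map --- in Federer's language, that the reach of $\Sigma_n$ does not degenerate. If one prefers to avoid invoking smoothness of the convergence, there is a slightly more robust variant: once the curvature bound rules out a non-immersion at scale $1/n$, the failure of injectivity yields distinct points $x_n\ne y_n\in\Sigma_n$ and $|t_n|,|s_n|\le 1/n$ with $\exp_{x_n}(t_n\mathbf{n})=\exp_{y_n}(s_n\mathbf{n})$, so $\dist(x_n,y_n)\le|t_n|+|s_n|\to 0$; passing to a subsequence $x_n,y_n\to p$, and using that a single bounded-curvature sheet through $p$ is a small $C^2$ graph over a plane and hence has uniform reach, the two points must lie on distinct local sheets, forcing the convergence to have multiplicity at least $2$ near $p$ --- which, combined with the multiplicity-$\le 2$ bound, again contradicts the hypothesis.
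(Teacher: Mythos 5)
Your proposal is correct and follows essentially the same route as the paper: argue by contradiction, use the uniform curvature bound from the preceding theorem together with the compactness theorem to extract a smooth, multiplicity-one limit which is an embedded minimal surface, and then contradict the assumed failure of the normal exponential map at scale $\delta_i\to 0$. The paper's own closing step is exactly your ``robust variant'': it extracts points $x_i^1,x_i^2$ whose intrinsic distance is bounded below (because a single bounded-curvature sheet has uniform reach) while their extrinsic distance tends to zero, and contradicts embeddedness of the minimal limit via the maximum principle.
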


\begin{proof}
We argue by contradiction. Suppose such $H_0,\delta_0$ does not exist. Then we can find a family of CMC surfaces $\Sigma_i$, with mean curvature $H_i\to 0$ and $\delta_i\to 0$ such that there is a point $p\in M$ such that $p=\exp_{x_i^j}(t_i^j\mathbf{n}),j=1,2$ for $x_i^j\in\Sigma_i$ and $\vert t_i^j\in[-\delta_i,\delta_i]$. Since we already get uniform curvature bound for $\Sigma_i$'s, we know $\dist_\Sigma(x_i^1,x_i^2)\geq d$ for some constant $d$ when $i$ large enough.

Again, a subsequence of $\Sigma_i$ smoothly converge to a smooth embedded minimal surface $\Sigma$. passing to subsequence we can find two points $x^1,x^2\in\Sigma$, with intrinsic distance $\dist_\Sigma(x^1,x^2)\geq d$ but extrinsic distance $\dist_{M}(x^1,x^2)=0$. This is a contradiction by maximum principle of minimal surface.
\end{proof}

Combining all the ingredients in this section we get the following lower bound for the first eigenvalue of CMC surfaces:

\begin{theorem}[Theorem \ref{T:Application eigenvalue}]
Let $M$ be a three manifold with positive Ricci curvature. Suppose there is no embedded minimal surface in $M$ which is the multiplicity $2$ limit of a sequence of CMC surfaces. Then for any embedded CMC surface with area bound $V$, genus bound $G$ and mean curvature bound $\vert H\vert\leq H_0$, we have the first eigenvalue lower bound:

\begin{equation}
\lambda\geq \frac{\min\Ric -HC}{2},
\end{equation}
where $C$ is a constant depending on $M,V,G,H_0$.
\end{theorem}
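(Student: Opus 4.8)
The plan is to assemble the three ingredients that Section~6 has set up: Reilly's formula in the Choi--Wang form, the trace inequality of Theorem~\ref{T:Trace on manifold}, and the uniform geometric bounds coming from the compactness Theorem~\ref{T:Main theorem}. First I would reduce to the case in which $\Sigma=\partial\Omega$ bounds a domain $\Omega\subset M$, by a covering argument as in \cite{choi1985space}: if $\Sigma$ is not separating, pass to the finite cover (of degree bounded in terms of $M$ alone) in which the lift $\tilde\Sigma$ becomes null-homologous, hence separating. Under this cover the Ricci lower bound, the mean curvature, and the genus and area bounds (up to the bounded degree factor) all survive, and since the pull-back of a first eigenfunction of $\Sigma$ is an admissible test function on $\tilde\Sigma$ one gets $\lambda_1(\Sigma)\ge\lambda_1(\tilde\Sigma)$; so it suffices to bound $\lambda_1$ from below on each component of $\tilde\Sigma$.

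Next, with $\Sigma=\partial\Omega$, I would take a first eigenfunction $f$ of $\Sigma$ normalized by $\int_\Sigma f^2=1$, solve the Dirichlet problem $\Delta_\Omega u=0$, $u|_{\partial\Omega}=f$, and feed $u$ into Reilly's identity; choosing the orientation so that $\Omega$ (rather than its complement) makes $-\int_{\partial\Omega}A((\nabla u)^\top,(\nabla u)^\top)\ge 0$ yields exactly inequality~(\ref{E:Reilly for CMC}). The whole problem then reduces to bounding $\int_\Omega|\nabla^2u|^2-H\int_{\partial\Omega}u_n^2$ from below (and we may assume $H\ge 0$, since $H<0$ only strengthens (\ref{E:Reilly for CMC}) once the Hessian term is kept). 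For this I would invoke the two theorems immediately preceding this one: the hypothesis that no embedded minimal surface in $M$ is a multiplicity~$2$ limit of CMC surfaces gives, after possibly shrinking $H_0$, a uniform curvature bound $|A|\le A_0(M,V,G,H_0)$ and a uniform collar width $\delta=\delta(M,V,G,H_0)>0$, so Theorem~\ref{T:Trace on manifold} applies and produces $\int_{\partial\Omega}u_n^2\le C_1\int_\Omega(|\nabla u|^2+|\nabla^2u|^2)$ with $C_1=C_1(M,A_0,\delta)$.

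Substituting the trace bound into (\ref{E:Reilly for CMC}) gives
\[
2\lambda\int_\Omega|\nabla u|^2\ \ge\ (\min\Ric-HC_1)\int_\Omega|\nabla u|^2+(1-HC_1)\int_\Omega|\nabla^2u|^2 ,
\]
so after shrinking $H_0$ once more to make $H_0C_1\le 1$, the Hessian term is non-negative and can be dropped; dividing by $2\int_\Omega|\nabla u|^2>0$ (which is positive because $f$, being a first eigenfunction, is non-constant) leaves $\lambda\ge(\min\Ric-HC_1)/2$, the claimed bound with $C=C_1$ depending only on $M,V,G,H_0$. I would then undo the covering reduction, recalling $\lambda_1(\Sigma)\ge\lambda_1(\tilde\Sigma)$.

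The step I expect to be the real obstacle is obtaining the uniform bounds $|A|\le A_0$ and $\delta>0$ over the entire family: these fail for CMC surfaces in general — the unduloid necks of Section~\ref{section:Touching} show curvature and collar size degenerating along neck pinchings — and controlling them is precisely what forces us through the compactness Theorem~\ref{T:Main theorem} together with the standing no-multiplicity-$2$-minimal-limit hypothesis. By contrast the Reilly computation and the trace estimate are routine once those bounds are in hand. The one technical point I would check carefully is that the covering reduction preserves, component by component, enough of the hypotheses for the two uniform-bound theorems to apply on the cover; this is standard but worth spelling out.
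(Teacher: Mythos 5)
Your proposal is correct and follows essentially the same route as the paper: Reilly's identity in the Choi--Wang form, the trace inequality of Theorem \ref{T:Trace on manifold}, and the uniform bounds on $\vert A\vert$ and on the collar width $\delta$ supplied by the two preceding theorems under the no-multiplicity-$2$-minimal-limit hypothesis. You are in fact slightly more explicit than the text in two places --- the covering reduction to the case $\Sigma=\partial\Omega$ (which the paper only invokes via the Choi--Schoen reference) and the observation that $H_0$ must also be shrunk so that the coefficient $(1-HC_1)$ of the Hessian term is non-negative before that term is discarded --- but these are refinements of, not departures from, the paper's argument.
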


\begin{remark}
An interesting question is: can we directly get the first eigenvalue lower bound for CMC surfaces? If we can, then we can prove the compactness theorem for CMC surfaces without area bound.
\end{remark}
\section*{Appendix: Difference of Two Surfaces in Three Manifold}
In this appendix, we will present some computations of the difference of two surfaces in three manifold. These kind of computations have already appeared in Kapouleaus \cite{kapouleas1990complete} and Colding-Minicozzi \cite{colding2011course} in three dimensional Euclidean space.

Let $\Sigma_1,\Sigma_2$ be two surface in three manifold $M$, and let $H_1,H_2$ be their mean curvature respectively. Moreover, we assume $\Sigma_2$ can be viewed as a graph over $\Sigma_1$, i.e.

\[\Sigma_2=\{\exp_x(\varphi \bn):x\in\Sigma_1\},\]

where $\varphi$ is a $C^2$ function on $\Sigma_1$.

\begin{theorem}
Suppose $\Vert \varphi\Vert_{C^2}$ small enough, then $\varphi$ satisfies a second order elliptic equation.
\end{theorem}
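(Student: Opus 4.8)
The plan is to write $\Sigma_2$ as the normal exponential graph $\exp_x(\varphi\bn)$ over $\Sigma_1$ and compute the mean curvature of $\Sigma_2$ as a (nonlinear) second-order differential operator applied to $\varphi$, then expand this operator around $\varphi\equiv 0$. First I would set up a local orthonormal frame $e_1,e_2$ on $\Sigma_1$ and use Fermi (normal exponential) coordinates along $\Sigma_1$, so that the metric $g$ on a tubular neighbourhood takes the form $g = dt^2 + g_{ij}(x,t)\,dx^i dx^j$ with $g_{ij}(x,0)$ the induced metric on $\Sigma_1$ and $\partial_t g_{ij}(x,0) = -2A_{ij}$ the second fundamental form. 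In these coordinates the graph $t=\varphi(x)$ has an induced metric, unit normal, and second fundamental form that are all explicit algebraic/first-order expressions in $\varphi$, $\nabla\varphi$, $\nabla^2\varphi$ and the ambient metric coefficients $g_{ij}(x,\varphi)$, $\partial_t g_{ij}(x,\varphi)$. Tracing the second fundamental form against the induced metric gives
\begin{equation}
H_2 = \mc Q(x,\varphi,\nabla\varphi,\nabla^2\varphi),
\end{equation}
a quasilinear second-order operator whose coefficients are smooth functions of their arguments on the region where $\Vert\varphi\Vert_{C^1}$ is small enough that the graph stays inside the tubular neighbourhood and remains a genuine graph.

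Next I would perform the Taylor expansion of $\mc Q$ in the jet of $\varphi$ about the zero section. The zeroth-order term $\mc Q(x,0,0,0)$ is exactly $H_1$, the mean curvature of $\Sigma_1$. The first-order (linearization) term is, by the classical computation of the second variation of area, precisely the Jacobi operator $L\varphi = \Delta_{\Sigma_1}\varphi + (\vert A\vert^2 + \Ric(\bn,\bn))\varphi$; this is where the ambient Ricci curvature enters, through the $t$-derivatives of $g_{ij}$ via the Riccati equation for the shape operator of the level sets. Collecting everything, I would write
\begin{equation}
H_2 - H_1 = L\varphi - \dv(a\nabla\varphi) - b\cdot\nabla\varphi - c\varphi,
\end{equation}
where $a$, $b$, $c$ are the error terms gathering all contributions that are at least quadratic (or higher) in $(\varphi,\nabla\varphi,\nabla^2\varphi)$, or that involve the difference between $g_{ij}(x,\varphi)$ and $g_{ij}(x,0)$; writing the principal error in divergence form $\dv(a\nabla\varphi)$ is natural because the leading second-order part of $\mc Q$ is $g^{ij}(\text{induced})\,\partial_i\partial_j\varphi$ plus lower order, and $g^{ij}(\text{induced}) - g^{ij}(\Sigma_1)$ is itself $O(\Vert\varphi\Vert_{C^1})$. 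By construction $a,b,c$ are smooth in $(x,\varphi,\nabla\varphi,\nabla^2\varphi)$ and vanish when $\varphi\equiv 0$, hence $\Vert a\Vert_{C^0},\Vert b\Vert_{C^0},\Vert c\Vert_{C^0}\to 0$ as $\Vert\varphi\Vert_{C^2}\to 0$. Ellipticity of the resulting operator for $\Vert\varphi\Vert_{C^2}$ small is immediate, since the principal symbol is a small perturbation of the Laplacian symbol of $\Sigma_1$.

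The main obstacle is purely computational bookkeeping: carrying out the expansion of the induced metric, normal, and shape operator of the graph $t=\varphi(x)$ in the non-flat ambient metric $g = dt^2 + g_{ij}(x,t)dx^i dx^j$ without losing track of which terms are linear and which are genuinely higher order, and verifying that the linear part is exactly the Jacobi operator $L$ (in particular that the curvature terms assemble correctly into $\vert A\vert^2 + \Ric(\bn,\bn)$ using $\partial_t g_{ij}(x,0) = -2A_{ij}$ together with the Riccati/Jacobi equation $\partial_t A_{ij} = -A_{ik}A^k_j - R_{titj}$ and the trace identity $\sum_i R_{titi} = \Ric(\partial_t,\partial_t)$ along $\Sigma_1$). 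This is a standard but lengthy calculation, essentially the one in Colding--Minicozzi \cite{colding2011course} and Kapouleas \cite{kapouleas1990complete} adapted from $\mb R^3$ to a general ambient metric; I will record the key intermediate formulas and leave the routine algebra to the reader, emphasizing that all error coefficients are manifestly smooth and vanish to the claimed order at $\varphi=0$, which is all that is needed for the applications in Theorem~\ref{limit} and Theorem~\ref{T:multiplicity1}.
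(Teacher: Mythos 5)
Your proposal is correct and follows essentially the same route as the paper's appendix: Fermi coordinates along $\Sigma_1$, an explicit expression of $H_2$ as a quasilinear second-order operator $\mc Q(x,\varphi,\nabla\varphi,\nabla^2\varphi)$, and a Taylor expansion about the zero jet (the paper uses the integral form via the fundamental theorem of calculus along $t\mapsto t\varphi$) whose linear part is identified with $L$ by the second variation formula, with the remainder collected into $\dv(a\nabla\varphi)+b\cdot\nabla\varphi+c\varphi$ where $a,b,c\to 0$ as $\Vert\varphi\Vert_{C^2}\to 0$. The only cosmetic difference is that you identify the linearization via the Riccati equation for the shape operator, whereas the paper simply cites the second variation formula in the limit.
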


\begin{proof}
Since this assertion is a local assertion, we only need to check this in a small neighbourhood $U$ of $p\in\Sigma_1$. Let us choose the Fermi coordinate $x_1,x_2,x_3$ in $U$ (so we can view $U$ as an open subset of $\mb R^3$ with non-Euclidean metric), such that 

\[\Sigma_1=\{(x_1,x_2,x_3):x_3=0\}.\]

Moreover, the metric $g$ under this coordinate satisfies $g_{i3}=0,i=1,2$, and $(0,0,1)$ is the unit normal vector at each points in $\Sigma_1$. We will use $\partial_1,\partial_2,\partial_3$ to denote the vector fields defined on $M$ with respect to the differential under this coordinate.

$\Sigma_2$ is a graph

\[\Sigma_2=\{(x_1,x_2,x_3):x_3=\varphi(x_1,x_2)\}.\]

From now on we will use tilde over quantity to denote the quantity of $\Sigma_2$. We use $x_1,x_2$ to parametrize $\Sigma_2$, then we have
\begin{equation}
\tilde\partial_{i}=\tilde \partial_{x_i}=\partial_i+\varphi_i\partial_3,i=1,2.
\end{equation}

Then the metric on $\Sigma_2$ satisfies
\begin{equation}
\tilde g_{ij}=g_{ij}+g_{i3}\varphi_j+g_{j3}\varphi_i+g_{33}\varphi_i\varphi_j.
\end{equation}

Now we compute the unit normal vector fields on $\Sigma_2$. We observe that $\Sigma_2$ can be viewed as the $0$-level set of the function $\varphi(x_1,x_2)-x_3$. So we can find a normal vector field $\mathbf{m}$ on $\Sigma_2$:

\begin{equation}
\mathbf{m}=-\nabla^M(\varphi(x_1,x_2)-x_3)=g^{ij}(\varphi_k\delta_{i}^k-\delta_{3i})\partial_j.
\end{equation}

Note
\[\langle \mathbf{m},\mathbf{m}\rangle=g^{ij}(\varphi_k\delta_{i}^k-\delta_{3i}) g^{pq}(\varphi_k\delta_{p}^k-\delta_{3p})g_{qj}=g^{ij}\varphi_i\varphi_j-2g^{3k}\varphi_k+g^{33}.\]

So 
\begin{equation}
\bn=-(g^{pq}\varphi_p\varphi_q-2g^{3l}\varphi_l+g^{33})^{-1/2}g^{ij}(\varphi_k\delta_{i}^k-\delta_{3i})\partial_j.
\end{equation}

Now let us calculate the mean curvatures. From now on we will slightly abuse the notation when we use Einstein notation. When we use $i,j$ in the summation we will assume they are in $\{1,2\}$. On $\Sigma_1$, $\bn=\partial_3$, so we have 

\begin{equation}
H_1=g^{ij}\langle \nabla_{\partial_i}\partial_j,\partial_3\rangle=g^{ij}\Gamma_{ij}^kg_{k3}.
\end{equation}

On $\Sigma_2$, first we note the covariant derivative is
\begin{equation}
\begin{split}
\nabla_{\tilde{\partial_i}}\tilde{\partial_j}&=\nabla_{\partial_i+\varphi_i\partial_3}(\partial_j+\varphi_j\partial_3)\\
&=\nabla_{\partial_i}\partial_j+\varphi_i\nabla_{\partial_3}\partial_j+\varphi_{ij}\partial_3+\varphi_i\varphi_j\nabla_{\partial_3}\partial_3\\
&=\nabla_{\partial_i}\partial_j+\varphi_i\nabla_{\partial_3}\partial_j+\varphi_{ij}\partial_3.
\end{split}
\end{equation}

Here we note that since $\partial_3$ is the direction of the geodesic starting from $\Sigma_1$, hence $\nabla_{\partial_3}\partial_3=0$. Then the mean curvature of $\Sigma_2$ is

\begin{equation}
\begin{split}
H_2&=\tilde{g^{ij}}\langle \nabla_{\tilde{\partial_i}}\tilde{\partial_j},\bn\rangle\\
&=\tilde{g^{ij}}\langle  \nabla_{\partial_i}\partial_j+\varphi_i\nabla_{\partial_3}\partial_j+\varphi_{ij}\partial_3 ,-(g^{pq}\varphi_p\varphi_q-2g^{3l}\varphi_l+g^{33})^{-1/2}g^{rs}(\varphi_k\delta_{r}^k-\delta_{3r})\partial_s\rangle\\
&=-\tilde{g^{ij}}(g^{pq}\varphi_p\varphi_q-2g^{3l}\varphi_l+g^{33})^{-1/2}g^{rs}(\varphi_k\delta_{r}^k-\delta_{3r})(\Gamma_{ij}^mg_{ms}+\varphi_i\Gamma_{3j}^mg_{ms}+\varphi_{ij}g_{3s})
\end{split}
\end{equation}

In conclusion, $H_2$ is a function of $\varphi,\nabla\varphi,\nabla^2\varphi$ and the coordinate in ambient manifold. Now let us define a function $H(x_1,x_2,x_3,v_i,w_{ij})$ where $H(x,y,\varphi,\nabla\varphi,\nabla^2\varphi)=H_2$ as above, where $(x_1,x_2,x_3)$ is the local coordinate. Also note that $H(x,y,0,0,0,0)=H_1$. Then we have
\begin{equation}
\begin{split}
H_2-H_1=&\varphi\int_{0}^{1}\frac{\partial H(x_1,x_2,t\varphi,t\nabla\varphi,t\nabla^2\varphi^2)}{\partial x_3}dt+\varphi_i\int_{0}^{1}\frac{\partial H(x_1,x_2,t\varphi,t\nabla\varphi,t\nabla^2\varphi^2)}{\partial v_i}dt\\
&+\varphi_{jk}\int_{0}^{1}\frac{\partial H(x_1,x_2,t\varphi,t\nabla\varphi,t\nabla^2\varphi^2)}{\partial w_{jk}}dt.
\end{split}
\end{equation}

Let the coefficients of $\varphi,\nabla\varphi,\nabla^2\varphi$ on the right hand side of the above identity be a function depending on $\varphi,\nabla\varphi,\nabla^2\varphi$. Then let $\Vert\varphi\Vert_{C^2}$ goes to $0$ we can see the right hand side terms will just be $Lu$ by the second variational formula. Thus we have

\begin{equation}
Lu-(H_2-H_1)=\dv(a\nabla\varphi)+b\cdot\nabla \varphi+c\varphi.
\end{equation}
Where $a,b,c$ turns to $0$ as $\Vert\varphi\Vert_{C^2}$ goes to $0$.

\end{proof}

\bibliography{bibfile}
\bibliographystyle{alpha}
\end{document}